\newtheorem{theorem}{Theorem}
\newtheorem{observation}[theorem]{Observation}
\newtheorem{lemma}[theorem]{Lemma}
\newtheorem{corollary}[theorem]{Corollary}
\newtheorem{definition}[theorem]{Definition}
\newtheorem{proposition}[theorem]{Proposition}
\newcommand{\ff}{\mathcal{F}}
\begin{document}


\title{A characterization of linearizable instances of the quadratic minimum spanning tree problem}

\author{\sc{Ante \'Custi\'c}\thanks{{\tt acustic@sfu.ca}.
Department of Mathematics, Simon Fraser University Surrey,
 250-13450 102nd AV, Surrey, British Columbia, V3T 0A3, Canada}
\and
\sc{Abraham P. Punnen}\thanks{{\tt apunnen@sfu.ca}. Department of Mathematics, Simon Fraser University Surrey,
 250-13450 102nd AV, Surrey, British Columbia, V3T 0A3, Canada}}

\maketitle

\begin{abstract}
We investigate special cases of the quadratic minimum spanning tree problem \mbox{(QMSTP)} on a graph $G=(V,E)$ that can be solved as a linear minimum spanning tree problem. Characterization of such problems on graphs with special properties are given. This include complete graphs, complete bipartite graphs, cactuses among others. Our characterization can be verified in $O(|E|^2)$ time. In the case of complete graphs and when the cost matrix is  given in factored form, we show that our characterization can be verified in $O(|E|)$ time. Related open problems are also indicated.\medskip

\noindent\emph{Keywords:} Minimum spanning tree, quadratic 0-1 problems, quadratic minimum spanning tree, polynomially solvable cases, linearization. 
\end{abstract}


\section{Introduction} The minimum spanning tree problem (MSTP) is well studied in the combinatorial optimization literature. A generalization of this problem, called the {\it quadratic minimum spanning tree problem} (QMSTP), recently received considerable attention from the research community. Some of these papers focus on exact algorithms~\cite{Assad,x9} while the majority of published works deal with heuristic algorithms~\cite{x2,x8,x3,Oncan,x1,x10,Zhou}. Isolated results on some theoretical properties of the problem are also available.
 Special cases of QMSTP studied in the literature include multiplicative objective functions~\cite{y3,y4,y5}, spanning trees with conflict constraints~\cite{da,Zh}, and spanning tree problems with one quadratic term~\cite{x5,x4}. Some polynomially solvable special cases of QMSTP are discussed in~\cite{ZCP15} along with various complexity results.

Let $G=(V,E)$ be a graph such that $|V|=n$ and $E=\{1,2,\ldots ,m\}$. For each $(e,f)\in E\times E$ a cost $q(e,f)$ is prescribed. Let $\ff$ be the family of all spanning trees of $G$ and $Q$ be the $m\times m$ matrix with its $(i,j)$-th element as $q(i,j)$. For each $T\in \ff$ its {\it cost} $Q(T)$ is given by $$Q(T)= \sum_{e\in T}\sum_{f\in T} q(e,f).$$  
Note that the notation $e\in T$ means $e$ belongs to the edge set of $T$. Then the QMSTP is to find a spanning tree $T$ in $\ff$ such that $Q(T)$ is as small as possible. Similarly, for each $T\in \ff$, let $C(T)=\sum_{e\in T}c(e)$, where $c(e)$ is a prescribed cost of edge $e\in E$. Given a cost matrix $Q$,  the {\it quadratic spanning tree linearization problem} (QST-LP) is to determine if there exists a linear cost vector $C=(c(1),c(2),\ldots,c(m))$ such that $Q(T)=C(T)$ for all $T\in \ff$.  If the answer to this decision problem is `yes', the quadratic cost matrix $Q$ is said to be {\it linearizable} and $C$ is called a {\it linearization} of $Q$. Note that $|\ff|$ could be as large as $n^{n-2}$ and hence QST-LP is a non-trivial problem. In fact, there is no immediate direct way to test if QST-LP belongs to NP.

The linearization problem for the quadratic assignment problem (QAP) was considered by Kabadi and Punnen~\cite{y10}, Adams and Wadell~\cite{AW}, and \c{C}ela et al.~\cite{CDW}. The special case of  Koopmans-Beckman QAP linearization problem was studied by Punnen and Kabadi~\cite{y9} and \c{C}ela et al.~\cite{CDW}. In this paper, we provide a characterization of linearizable instances of QMSTP on a wide class of graphs, including the complete graph, complete bipartite graph, cactus etc. Our characterization can be tested in $O(m^2)$ time. Also an $O(n)$ algorithm  for recognizing an $n\times n$ sum matrix 
represented in factored form is given. In the case of complete (bipartite) graphs, this leads to an $O(m)$ algorithm to test if symmetric matrix $Q$ is linearizable when 
represented in factored form. As a byproduct of these results, we have new polynomially solvable special cases of the QMSTP.

\section{Preliminaries}

QMSTP is well known to be strongly NP-hard. In fact, it is NP-hard even if $Q$ is of rank one~\cite{y7}. A special case of the rank one QMSTP is called the {\it multiplicative minimum spanning tree problem} (MMSTP) considered by various authors~\cite{y3,y4,y1,y5,Oncan,y7,z1,y2}. The MMSTP on the graph $G$ can be stated as
\begin{align*}\label{qmst_of}
\text{Minimize\hspace{9pt}}& \left(\sum_{e\in T}d^1_e+\delta_1\right )\left(\sum_{e\in T}d^2_e+\delta_2\right )\\
\text{Subject to } & T\in \ff,
\end{align*}
where $d^1_e,d^2_e$ are two prescribed weights of the edge $e\in E$ and $\delta_1,\delta_2$ are constants. If $\left(\sum_{e\in T}d^1_e+\delta_1\right ) > 0$  and $\left(\sum_{e\in T}d^2_e+\delta_2\right ) > 0$ for all $T\in\ff$, MMSTP can be solved in polynomial time using the parametric minimum spanning tree problem~\cite{y1,Oncan,y7,z1,y2}. If $d^1_e,d^2_e$ are allowed to take any real values, then MMSTP is known to be NP-hard~\cite{y7}. We now observe that MMSTP is NP-hard even if $d^1_e,d^2_e \geq 0$ but $\delta_1$ and $\delta_2$ are arbitrary.

\begin{theorem}
	The MMSTP is NP-hard even if $d^1_e,d^2_e \geq 0$ for all $e\in E$.
\end{theorem}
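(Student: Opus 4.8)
The plan is to reduce from a known NP-hard problem whose natural formulation forces a product of two nonnegative linear terms to behave nonmonotonically, which is exactly what the arbitrary constants $\delta_1,\delta_2$ permit. The cleanest source problem is the (unweighted or weighted) \textsc{Minimum Spanning Tree with a budget / exact-weight} flavor, or more directly a problem where one wants a spanning tree whose weight under a single edge-weight function is as close as possible to a prescribed target. Concretely, I would start from an NP-hard problem such as deciding whether $G$ has a spanning tree $T$ with $\sum_{e\in T} w_e = K$ for given nonnegative integer weights $w_e$ and target $K$ (this "exact spanning tree" problem is NP-complete by a standard reduction, e.g. from \textsc{Subset Sum} or \textsc{Partition} embedded into a graph via parallel-edge or path gadgets).

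The key steps, in order: First, set $d^1_e := w_e$ and $d^2_e := w_e$ for all $e\in E$, so both linear parts use the same nonnegative weights and hence $\sum_{e\in T} d^i_e \ge 0$ for every $T$. Second, choose the constants to center the product at the target: take $\delta_1 := -K$ and $\delta_2 := -K$, so the objective becomes $\bigl(\sum_{e\in T} w_e - K\bigr)^2$, a perfect square. Note $d^1_e,d^2_e\ge 0$ as required, while $\delta_1,\delta_2$ are arbitrary (negative) constants — this is precisely the regime the theorem targets. Third, observe that the minimum of $\bigl(\sum_{e\in T} w_e - K\bigr)^2$ over $T\in\ff$ equals $0$ if and only if $G$ has a spanning tree of total weight exactly $K$; otherwise the optimum is a strictly positive integer (since weights and $K$ are integers). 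Thus a polynomial algorithm for MMSTP in this restricted form would decide the exact spanning tree problem, giving NP-hardness. Finally, I would verify that the exact spanning tree problem is itself NP-hard with nonnegative weights; if a self-contained argument is wanted, reduce from \textsc{Partition}: given $a_1,\dots,a_k$ with sum $2S$, build a graph that is a "chain of parallel-edge pairs," where the $i$-th pair offers either weight $a_i$ or weight $0$, so spanning trees correspond to subsets and a tree of weight exactly $S$ exists iff a balanced partition does.

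The main obstacle — really the only delicate point — is constructing the graph gadget so that spanning trees of $G$ are in weight-preserving bijection (or surjection onto the relevant weight values) with the combinatorial objects of the source problem, while keeping all edge weights nonnegative. The parallel-edge chain gadget handles this, but one must be careful that (i) the graph is simple if the paper's conventions require it — replace each parallel pair by a two-edge path through a fresh degree-two vertex, with one edge weighted $a_i$ and the other $0$, so every spanning tree must use exactly one "choice" edge per gadget — and (ii) no extraneous spanning trees arise with intermediate weights. A short argument shows the tree structure of the chain forces exactly the intended choices. With the gadget in hand, the reduction is immediate and the squared-objective observation closes the proof; everything else is routine.
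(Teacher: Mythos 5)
Your overall strategy is exactly the paper's: reduce from Subset Sum (equivalently Partition), set $d^1=d^2=w$ and $\delta_1=\delta_2=-K$ so that the objective becomes the perfect square $\left(\sum_{e\in T}w_e-K\right)^2$, which is zero precisely when some spanning tree hits the target weight. The one genuine problem is your simple-graph gadget. If you literally replace each parallel pair between consecutive chain vertices $u_{i-1}$ and $u_i$ by a single two-edge path $u_{i-1}-z_i-u_i$, that path becomes the \emph{only} connection between $u_{i-1}$ and $u_i$, so every spanning tree is forced to contain both of its edges; the whole graph degenerates to a path, which is itself a tree with a unique spanning tree of weight $\sum_i a_i$, and all choice disappears. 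The claim that ``every spanning tree must use exactly one choice edge per gadget'' is therefore false for the construction as described, and the reduction collapses.

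The repair is small: keep a direct $0$-weight edge between $u_{i-1}$ and $u_i$ and subdivide only the second parallel edge, so that each gadget is a triangle; a spanning tree then omits exactly one of the three triangle edges, and its contribution is $a_i$ or $0$ accordingly, restoring the bijection with subsets. This is essentially what the paper does: it attaches a $3$-cycle on $v^i_1,v^i_2,v^i_3$ to a connecting path $v^1_1-v^2_1-\cdots-v^n_1$, puts weight $a_i$ on the single edge $(v^i_1,v^i_2)$ and weight $0$ elsewhere, so each spanning tree may include or exclude that edge. With that one correction your argument coincides with the paper's proof.
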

\begin{proof}
We reduce the {\it subset sum problem} to the MMSTP. The subset sum problem can be stated as follows. Given non-negative numbers $a_1,a_2,\ldots ,a_n$ and a constant $K$, determine if there exists a subset $S$ of $\{1,2,\ldots ,n\}$ such that $\sum_{i\in S}a_i=K$. From an instance of subset sum problem, we construct an instance of MMSTP as follows. For each $i=1,2,\ldots, n$ create a 3-cycle on nodes $v^i_1,v^i_2,v^i_3$. Link these 3-cycles using the path $v^1_1-v^2_1-\cdots -v^n_1$. For the edge $e^i_{12}=(v^i_1,v^i_2)$ assign cost $d^1_{e^i_{12}}=a_i, i = 1,2,\ldots, n$.  For any other edge $e$, we set $d^1_e=0$. Choose the vector $d^2=d^1$ and $\delta_1=\delta_2=-K$. It can be verified that the resulting MMSTP has an optimal spanning tree with objective function value zero if and only if the subset sum problem has a solution. The result now follows from the NP-completeness of the subset sum problem.
\end{proof}

We continue this section by presenting some useful basic facts about the \mbox{QMSTP}.

Let $M^{m\times m}$ be the vector space of all real valued $m\times m$ matrices. The set of linearizable quadratic cost matrices for QMSTP on a given graph with $m$ edges forms a subspace of $M^{m\times m}$. As a consequence we have the following.

\begin{observation}\label{obs1}
	Let $Q_1$ and $Q_2$ be two cost matrices for the QMSTP on a graph $G$. If $Q_1$ and $Q_2$ are linearizable, then $\alpha Q_1+\beta Q_2$ is also linearizable for any scalars $\alpha$ and $\beta$. Furthermore, if $C_1$ is a linearization of $Q_1$ and $C_2$ is a linearization of $Q_2$, then $\alpha C_1+\beta C_2$ is a linearization of $\alpha Q_1+\beta Q_2$.
\end{observation}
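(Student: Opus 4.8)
The plan is to exploit the linearity of both of the maps involved: the map sending a matrix $Q$ to the function $T \mapsto Q(T)$ on $\ff$, and the map sending a cost vector $C$ to the function $T \mapsto C(T)$. First I would observe, directly from the definition $Q(T)=\sum_{e\in T}\sum_{f\in T}q(e,f)$, that for any $m\times m$ matrices $Q_1,Q_2$ and scalars $\alpha,\beta$ one has $(\alpha Q_1+\beta Q_2)(T)=\alpha Q_1(T)+\beta Q_2(T)$ for every $T\in\ff$, since the double sum is additive in the entries of the matrix. The analogous identity $(\alpha C_1+\beta C_2)(T)=\alpha C_1(T)+\beta C_2(T)$ holds for linear cost vectors for the same reason, using $C(T)=\sum_{e\in T}c(e)$.

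Next, assuming $Q_1(T)=C_1(T)$ and $Q_2(T)=C_2(T)$ for all $T\in\ff$, I would simply chain these identities: for every $T\in\ff$,
\[
(\alpha Q_1+\beta Q_2)(T)=\alpha Q_1(T)+\beta Q_2(T)=\alpha C_1(T)+\beta C_2(T)=(\alpha C_1+\beta C_2)(T),
\]
which shows that $\alpha C_1+\beta C_2$ is a linearization of $\alpha Q_1+\beta Q_2$, and in particular that $\alpha Q_1+\beta Q_2$ is linearizable. For the (equivalent) subspace assertion, I would additionally note that the zero matrix is linearized by the zero vector, so the set of linearizable cost matrices is nonempty and, by the computation above, closed under arbitrary linear combinations, hence a linear subspace of $M^{m\times m}$.

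There is essentially no obstacle here: the proof rests only on the bilinearity of $Q(T)$ in the entries of $Q$ and the linearity of $C(T)$ in $C$, both of which are immediate from the defining formulas. The one point worth stating carefully is that a linearization of a given $Q$ need not be unique; however, the displayed computation is valid for \emph{any} linearizations $C_1$ of $Q_1$ and $C_2$ of $Q_2$, so no particular choice is needed.
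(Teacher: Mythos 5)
Your proof is correct and matches the paper's reasoning: the paper states this observation without a formal proof, citing only the fact that linearizable matrices form a subspace of $M^{m\times m}$, which is precisely the linearity-of-$Q(T)$-and-$C(T)$ argument you spell out. Nothing is missing.
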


A square matrix $A$ is said to be a \emph{skew-symmetric matrix} if $A^T=-A$.

\begin{observation}\label{obs2}
If $Q$ is a cost matrix for the QMSTP on a graph $G$, $A$ is any skew-symmetric matrix and $D$ is a diagonal matrix, all of the same size, then $Q$ is linearizable if and only if $Q+A+D$ is linearizable.
\end{observation}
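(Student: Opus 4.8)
The plan is to reduce the statement to Observation~\ref{obs1} by showing that the perturbation $A+D$ is itself linearizable, with an explicitly computable linearization. First I would evaluate $A(T)=\sum_{e\in T}\sum_{f\in T}a(e,f)$ for an arbitrary $T\in\ff$: pairing the ordered pairs $(e,f)$ and $(f,e)$ with $e\neq f$ and using $a(e,f)=-a(f,e)$, each such pair contributes $0$, while the diagonal terms $a(e,e)$ vanish because a skew-symmetric matrix has zero diagonal. Hence $A(T)=0$ for every spanning tree $T$, so $A$ is linearizable with the zero cost vector as a linearization.

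Next I would evaluate $D(T)=\sum_{e\in T}\sum_{f\in T}d(e,f)$; since $d(e,f)=0$ for $e\neq f$, this collapses to $D(T)=\sum_{e\in T}d(e,e)$, which equals $C_D(T)$ for the linear cost vector $C_D$ with $c_D(e)=d(e,e)$. Thus $D$ is linearizable, and by Observation~\ref{obs1} (with $\alpha=\beta=1$) so is $A+D$, with linearization $C_D$. The equivalence then follows in both directions from Observation~\ref{obs1}: if $Q$ has linearization $C$, then $Q+A+D=1\cdot Q+1\cdot(A+D)$ is linearizable with linearization $C+C_D$; conversely, since $-A$ is again skew-symmetric and $-D$ diagonal, $-(A+D)$ is linearizable by the same reasoning, so $Q=1\cdot(Q+A+D)+1\cdot\big(-(A+D)\big)$ is linearizable.

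There is no substantial obstacle here; the only point demanding care is the cancellation argument for $A$, where one must use that skew-symmetry forces the diagonal entries $a(e,e)$ to be zero, so that no unpaired terms survive in the double sum.
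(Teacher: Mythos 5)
Your proposal is correct and follows essentially the same route the paper intends: the paper gives no formal proof of Observation~\ref{obs2}, but the surrounding remarks (that a skew-symmetric $Q$ satisfies $Q(T)=0$ for every spanning tree, and that linearizable matrices form a subspace, i.e.\@ Observation~\ref{obs1}) amount to exactly your argument. Your explicit treatment of the diagonal part and of the converse direction via $-(A+D)$ fills in the details cleanly.
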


It may be noted that if $Q$ is skew-symmetric then $Q(T)=0$ for any spanning tree $T$. Thus a skew-symmetric matrix is linearizable for any graph $G$.

\begin{observation}\label{obs3}
If $Q$ is a cost matrix for the QMSTP on a graph $G$. Then $Q$ is linearizable if and only if $\frac{1}{2}(Q+Q^T)$ is linearizable. Furthermore, $C$ is a linearization of $Q$ if and only if $C$ is a linearization of $\frac{1}{2}(Q+Q^T).$
\end{observation}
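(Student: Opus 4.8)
The plan is to split $Q$ into its symmetric and skew-symmetric parts and exploit the fact, noted just after Observation~\ref{obs2}, that a skew-symmetric matrix evaluates to zero on every spanning tree. Write $Q = S + A$, where $S = \frac{1}{2}(Q + Q^T)$ is symmetric and $A = \frac{1}{2}(Q - Q^T)$ is skew-symmetric, with entries $a(e,f) = \frac{1}{2}(q(e,f)-q(f,e))$.

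The key step is the pointwise identity $Q(T) = S(T)$ for every $T \in \ff$. For any spanning tree $T$,
\begin{equation*}
A(T) = \sum_{e\in T}\sum_{f\in T} a(e,f) = \sum_{e\in T} a(e,e) + \sum_{\substack{\{e,f\}\subseteq T\\ e\neq f}}\bigl(a(e,f)+a(f,e)\bigr) = 0,
\end{equation*}
since $a(e,e)=0$ and $a(f,e) = -a(e,f)$. Hence $Q(T) = S(T) + A(T) = S(T)$ for all $T\in\ff$.

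With this identity, both assertions follow at once: a linear cost vector $C$ satisfies $C(T) = Q(T)$ for all $T\in\ff$ if and only if it satisfies $C(T) = S(T)$ for all $T\in\ff$, i.e.\ $C$ is a linearization of $Q$ exactly when it is a linearization of $\tfrac{1}{2}(Q+Q^T)$, and in particular one matrix is linearizable precisely when the other is. (The linearizability equivalence alone is also immediate from Observation~\ref{obs2} applied with the skew-symmetric matrix $-A$ and the zero diagonal matrix, but the argument above additionally pins down the stronger claim that the same $C$ serves both.) I expect no real obstacle here; unlike the forthcoming results this rests only on the elementary fact that the double sum of a skew-symmetric matrix over any index set vanishes, and the one thing to watch is the routine bookkeeping of separating the double sum into its diagonal and off-diagonal terms and pairing $(e,f)$ with $(f,e)$.
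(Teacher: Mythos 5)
Your proof is correct and follows essentially the same route as the paper: both decompose $Q$ into its symmetric part $\tfrac{1}{2}(Q+Q^T)$ and skew-symmetric part $\tfrac{1}{2}(Q-Q^T)$ and use the fact that the skew-symmetric part contributes zero to $Q(T)$ for every spanning tree (the paper cites this fact together with Observations~\ref{obs1} and~\ref{obs2}, while you verify the vanishing double sum explicitly). No issues.
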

\begin{proof}
Note that $Q= \frac{1}{2}(Q-Q^T)+\frac{1}{2}(Q+Q^T)$. As $ \frac{1}{2}(Q-Q^T)$ is skew-symmetric, the result follows from Observation~\ref{obs1}, Observation~\ref{obs2} and the fact that the null-vector is a linearization of a skew-symmetric matrix.
\end{proof}

As noted earlier, any skew-symmetric matrix is linearizable regardless the structure of the underlaying graph. We now observe that if the underlying graph is a cycle, then the resulting QMSTP is linearizable regardless the structure of the cost matrix $Q$.

\begin{theorem}\label{thm:cycle}
QMSTP on a cycle is linearizable for any cost matrix $Q$. Further, the linearization $C = (c(1),c(2),\ldots ,c(n)) $ is given by 
\begin{equation}\label{eq:cycle}
	c(e)=q(e,e)+\sum_{i\in E\setminus \{e\}}(q(i,e)+q(e,i))-\frac{\sum_{i\in E}\sum_{j\in E, j\neq i}q(i,j)}{n-1}.
\end{equation}
\end{theorem}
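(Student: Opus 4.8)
The plan is to exploit the fact that a cycle on $n$ vertices has exactly $n$ spanning trees, each obtained by deleting one edge. So for a cycle $C_n$ with edge set $E=\{1,\dots,n\}$, the spanning tree $T_e := C_n \setminus \{e\}$ consists of all edges except $e$, and $\ff = \{T_1,\dots,T_n\}$. This makes the verification essentially a direct computation: I need to show that with $c(e)$ as defined in \eqref{eq:cycle}, we have $C(T_e) = Q(T_e)$ for every $e\in E$.

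First I would write out both sides explicitly. By Observation~\ref{obs3} I may assume $Q$ is symmetric (replacing $Q$ by $\tfrac12(Q+Q^T)$ changes neither linearizability nor the claimed $C$, since the formula \eqref{eq:cycle} is symmetric in the roles of $q(i,e)$ and $q(e,i)$ — actually I should double-check it is invariant under symmetrization, which it is because it only uses $q(i,e)+q(e,i)$ and the full double sum). Let $S := \sum_{i\in E}\sum_{j\in E,\, j\ne i} q(i,j)$ and $R := \sum_{i\in E} q(i,i)$, so $\sum_{i,j} q(i,j) = S + R$. On one hand, $Q(T_e) = \sum_{i\ne e}\sum_{j\ne e} q(i,j) = (S+R) - \bigl(q(e,e) + \sum_{i\ne e}(q(i,e)+q(e,i))\bigr)$; denote $P(e) := q(e,e) + \sum_{i\ne e}(q(i,e)+q(e,i))$, so $Q(T_e) = S + R - P(e)$. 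On the other hand, $C(T_e) = \sum_{f\ne e} c(f) = \sum_{f\in E} c(f) - c(e) = \Bigl(\sum_{f\in E} c(f)\Bigr) - P(e) + \frac{S}{n-1}$, using $c(e) = P(e) - \frac{S}{n-1}$.

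So the identity $C(T_e)=Q(T_e)$ reduces, after cancelling the common $-P(e)$, to showing the constant identity $\sum_{f\in E} c(f) + \frac{S}{n-1} = S + R$, i.e.\ $\sum_{f\in E} c(f) = S + R - \frac{S}{n-1}$. The last step is then to sum the definition of $c(f)$ over all $f$: $\sum_f c(f) = \sum_f q(f,f) + \sum_f \sum_{i\ne f}(q(i,f)+q(f,i)) - n\cdot\frac{S}{n-1} = R + 2S - \frac{nS}{n-1}$, since each ordered off-diagonal pair $(i,f)$ and its reverse are each counted once as $f$ ranges over $E$ with $i\ne f$, giving $2S$ total. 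Finally $R + 2S - \frac{nS}{n-1} = R + S\bigl(2 - \frac{n}{n-1}\bigr) = R + S\cdot\frac{n-2}{n-1} = R + S - \frac{S}{n-1}$, which is exactly what was needed. Hence $C(T_e) = Q(T_e)$ for all $e$, establishing linearizability.

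I do not expect a serious obstacle here: the whole argument is bookkeeping once one observes $|\ff| = n$. The only points requiring mild care are (i) confirming the formula \eqref{eq:cycle} is genuinely independent of the asymmetric part of $Q$ so that the reduction to symmetric $Q$ is legitimate (or, equivalently, just doing the computation directly without symmetrizing), and (ii) keeping the diagonal term $q(e,e)$ and the factor-of-two on off-diagonal terms straight when expanding $Q(T_e)=\sum_{i,j\in T_e}q(i,j)$. Both are routine; the matching of the constant terms via the $\frac{1}{n-1}$ normalization is the one place where the specific coefficient in \eqref{eq:cycle} is forced, and it is worth presenting that cancellation cleanly.
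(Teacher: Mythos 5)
Your proposal is correct and takes essentially the same route as the paper: both observe that the spanning trees of the cycle are exactly $T_e=G\setminus\{e\}$ and reduce the claim to checking $C(T_e)=Q(T_e)$ for each $e$. The paper phrases this as solving an (invertible) linear system and leaves the verification of \eqref{eq:cycle} to ``simple algebra,'' which is precisely the bookkeeping you carry out explicitly; your cancellation of the $-P(e)$ terms and the matching of constants via the $\tfrac{1}{n-1}$ coefficient is correct.
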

\begin{proof}
Let $G$ be a cycle with edges $e_1,e_2,\ldots ,e_n$. Then the spanning trees of $G$ are precisely $T_1,T_2,\ldots ,T_n$ where $T_i=G\setminus\{e_i\}$.  We need to find a vector $C=(c(1),c(2),\ldots ,c(n))$ such that
\[\sum_{e\in T_i}c(e)=Q(T_i)\]
for all $i=1,\ldots,n$. Equivalently, we want to find a solution to the linear system above, where the variables being $c(1),c(2),\ldots ,c(n)$. It can be verified that the coefficient matrix is invertible and hence the system has a unique solution. The formula for the linearization can be verified by simple algebra.
\end{proof}

The following is an immediate corollary of Theorem~\ref{thm:cycle}.

\begin{corollary}
The QMSTP is linearizable for any cost matrix $Q$ on the graph $T\cup \{e\}$ where $T$ is a tree and $e$ is an edge (not necessarily in $T$) joining two vertices of $T$.
\end{corollary}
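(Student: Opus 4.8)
The plan is to reduce the statement for the graph $T\cup\{e\}$ to Theorem~\ref{thm:cycle} by exploiting the fact that $T\cup\{e\}$ contains exactly one cycle. First I would observe that since $T$ is a tree on all vertices, adding the edge $e$ (joining two vertices of $T$) creates a graph with exactly one cycle $\Gamma$, namely the fundamental cycle of $e$ with respect to $T$; if $e$ already lies in $T$ then $T\cup\{e\}=T$ is itself a tree and the claim is trivial, so assume $e\notin T$. Write $B = E(T\cup\{e\})\setminus E(\Gamma)$ for the set of ``bridge'' edges lying outside the cycle. The key structural fact is that every spanning tree of $T\cup\{e\}$ must contain every edge of $B$ (each such edge is a bridge of $T\cup\{e\}$, so its removal disconnects the graph) and must omit exactly one edge of the cycle $\Gamma$. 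Hence the spanning trees of $T\cup\{e\}$ are in bijection with the spanning trees of the cycle $\Gamma$, via $T'\mapsto T'\cap E(\Gamma)$.

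Next I would set up a linearization by splitting the quadratic cost according to whether edges lie in $B$ or on $\Gamma$. For a spanning tree $T'$ of $T\cup\{e\}$, write $T' = B \cup S$ where $S = T'\cap E(\Gamma)$ is a spanning tree of the cycle $\Gamma$. Then
\[
Q(T') = \sum_{f\in B}\sum_{g\in B} q(f,g) \;+\; \sum_{f\in B}\sum_{g\in S}\bigl(q(f,g)+q(g,f)\bigr) \;+\; \sum_{f\in S}\sum_{g\in S} q(f,g).
\]
The first sum is a constant independent of $T'$; absorb it into the linear costs, e.g.\ by adding it to $c(f_0)$ for one fixed edge $f_0\in B$ (which is in every spanning tree), or by spreading it evenly. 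The second sum is linear in the indicator of $S$: it equals $\sum_{g\in S} \hat c(g)$ where $\hat c(g) = \sum_{f\in B}(q(f,g)+q(g,f))$ for $g\in E(\Gamma)$, and setting $\hat c(g)=0$ for $g\in B$; this directly gives a linear cost vector on $E(T\cup\{e\})$ that reproduces this term on every spanning tree. The third sum is exactly the quadratic cost, restricted to $\Gamma$, of the spanning tree $S$ of the cycle $\Gamma$; by Theorem~\ref{thm:cycle} it is linearizable, i.e.\ there is a vector $(c^\Gamma(g))_{g\in E(\Gamma)}$ with $\sum_{f\in S}\sum_{g\in S}q(f,g) = \sum_{g\in S} c^\Gamma(g)$ for every spanning tree $S$ of $\Gamma$; extend $c^\Gamma$ by zero on $B$.

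Finally I would combine the three pieces. The constant from the first sum contributes a vector supported on $\{f_0\}$; the second and third sums contribute the vectors above. Their sum is a single linear cost vector $C$ on $E(T\cup\{e\})$, and for every spanning tree $T' = B\cup S$ we get $C(T') = (\text{constant}) + \sum_{g\in S}\hat c(g) + \sum_{g\in S} c^\Gamma(g) = Q(T')$, using that $C$ vanishes on $B\setminus\{f_0\}$ and that $\sum_{f\in B}c(f)$ picks up exactly the constant. This proves $Q$ is linearizable. One could also invoke Observation~\ref{obs1} here to assemble the linearization additively, though an explicit check is just as quick.

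The only mild obstacle is bookkeeping: making sure the constant term is attached to an edge present in every spanning tree (any $f_0\in B$ works, and if $B=\emptyset$ then $T\cup\{e\}=\Gamma$ is itself a cycle and Theorem~\ref{thm:cycle} applies verbatim), and verifying the bijection between spanning trees of $T\cup\{e\}$ and spanning trees of $\Gamma$. Both are routine, so the corollary follows with essentially no new computation beyond Theorem~\ref{thm:cycle}.
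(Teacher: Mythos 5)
Your proof is correct and matches the route the paper intends: the paper gives no written argument, calling the corollary ``immediate'' from Theorem~\ref{thm:cycle}, and your decomposition of $Q(T')$ into the constant $B\times B$ block, the linear cross terms, and the cycle-restricted quadratic part (handled by Theorem~\ref{thm:cycle}) is exactly the natural way to spell that out. The edge cases you flag ($e\in T$, $B=\emptyset$) and the bijection between spanning trees of $T\cup\{e\}$ and of the unique cycle are handled correctly, so nothing is missing.
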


Note that the result of Theorem~\ref{thm:cycle} can be extended to any real valued objective function for a spanning tree, not simply the quadratic objective function.

We conclude this section with few more definitions and observations that we make use later in this paper.

\begin{definition}
	An $n_1\times n_2$ matrix $H=(h(i,j))$ is called a \emph{sum matrix} if there exist vectors $a=(a(1),a(2),\ldots,a(n_1))$ and  $b=(b(1),b(2),\ldots ,b(n_2))$ such that $h(i,j)=a(i)+b(j)$ for all  $i=1,\ldots,n_1$, $j=1,\ldots,n_2$. A square matrix is called a \emph{weak sum matrix} if the relation above is not mandatory for the elements on the diagonal.
\end{definition}

Note that if an $n\times n$ square sum matrix $H=(h(i,j))$ is symmetric, then $h(i,j)=a(i)+a(j)$ for all $i,j=1,2,\ldots ,n$, for some vector $a=(a(1),\ldots ,a(n))$. Similarly, if an $n\times n$ square weak sum matrix $H=(h(i,j))$ is symmetric, then $h(i,j)=a(i)+a(j)$ for all $i,j=1,2,\ldots,n,\ i\neq j$, for some vector $a=(a(1),\ldots ,a(n))$.

\begin{definition}
	A maximal biconnected subgraph of a simple graph $G$ is called a \emph{biconnected component} of $G$.
\end{definition}


The following fact is straightforward to prove, for example see~\cite[Ch.\@ 5, p.\@ 101]{Thesis}.

\begin{proposition}\label{prop:mst_covp}
	An instance of the MSTP on a graph $G$ has the property that every spanning tree has the same cost, if and only if all edges from the same biconnected component of $G$ have the same cost.
\end{proposition}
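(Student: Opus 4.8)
The plan is to build the argument on two standard structural facts about the blocks (maximal biconnected subgraphs) $B_1,\dots,B_k$ of the connected graph $G$ --- we may assume $G$ connected, since otherwise no spanning tree exists. First: the edge sets $E(B_1),\dots,E(B_k)$ partition $E$, and every spanning tree $T$ of $G$ restricts on each $B_i$ to a spanning tree of $B_i$, so that $|T\cap E(B_i)|=|V(B_i)|-1$ regardless of which spanning tree $T$ is chosen. Second: two distinct edges of $G$ lie in a common block if and only if some cycle of $G$ passes through both. I would either cite these --- they are the block--cut-vertex tree structure and the usual cycle characterization of blocks, cf.~\cite{Thesis} --- or dispatch them in a line, using that every cycle of $G$ lies inside a single block.

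For the ``if'' direction I would let $\gamma_i$ denote the common cost of the edges of $B_i$ and compute, for an arbitrary spanning tree $T$,
\[
C(T)=\sum_{i=1}^{k}\ \sum_{e\in T\cap E(B_i)}c(e)=\sum_{i=1}^{k}\bigl(|V(B_i)|-1\bigr)\gamma_i ,
\]
which does not depend on $T$; hence all spanning trees have the same cost.

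For the ``only if'' direction I would fix two edges $e\neq f$ lying in a common block and aim to prove $c(e)=c(f)$. Using the second structural fact, pick a cycle $C$ of $G$ through both $e$ and $f$; since $C\setminus\{e\}$ is a path it extends to a spanning tree $T$ of $G$ with $C\setminus\{e\}\subseteq T$ and $e\notin T$. Then $T\cup\{e\}$ contains precisely the cycle $C$, so $f$ lies on the fundamental cycle of $e$ with respect to $T$, and therefore $T':=(T\setminus\{f\})\cup\{e\}$ is again a spanning tree, with $C(T')-C(T)=c(e)-c(f)$. Since all spanning trees cost the same, $c(e)=c(f)$; as $e$ and $f$ were arbitrary edges of a common block, every block is monochromatic.

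The step I expect to carry the real weight is the first structural fact: that a spanning tree of $G$ meets each block in a spanning tree of that block, so that the number of tree edges inside a block is the same for every spanning tree. Granting it, the ``if'' direction is a one-line summation and the ``only if'' direction a single edge-exchange. If one prefers not to quote that fact directly, I would instead derive both implications from the spanning-tree exchange property together with the observation that the relation ``$e=f$, or $e$ and $f$ lie on a common cycle of $G$'' partitions $E$ into exactly the edge sets of the blocks.
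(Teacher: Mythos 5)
Your proof is correct. The paper itself does not prove this proposition---it declares it ``straightforward'' and cites an external reference---so there is no in-paper argument to compare against; your block-decomposition plus single edge-exchange argument is the standard one and is complete. Both structural facts you lean on are sound: a spanning tree meets each block $B_i$ in an acyclic set of exactly $|V(B_i)|-1$ edges (which follows from $\sum_i(|V(B_i)|-1)=n-1$ together with the bound $|T\cap E(B_i)|\le |V(B_i)|-1$), and two distinct edges of a common block lie on a common cycle since a block with at least two edges is $2$-connected. The one step worth a half-sentence more care is the claim that a spanning tree $T$ containing $C\setminus\{e\}$ automatically excludes $e$; this holds because otherwise $T$ would contain the whole cycle $C$.
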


Note that $\frac{1}{2}(Q+Q^T)$ is a symmetric matrix. Thus in view of Observation~\ref{obs3} hereafter we assume without loss of generality that the cost matrix $Q$ is symmetric.

\section{Characterization of Linearizable QMSTP}

In this section we investigate characterizations of linearizable QMSTP instances on various graph classes. Our findings are summarized in Theorem~\ref{thm:gen}.
We begin with a sufficient condition. 

\begin{lemma}\label{lm:sum}
	Let $Q$ be a symmetric cost matrix of the QMSTP on a graph $G=(V,E)$ such that for every pair $I$, $J$ of biconnected components of $G$, the submatrix of $Q$ defined by rows $I$ and columns $J$ is a sum matrix if $I\neq J$, or a symmetric weak sum matrix if $I=J$.  Then $Q$ is linearizable and a linearization of $Q$ can be computed in $O(|E|^2)$ time.
\end{lemma}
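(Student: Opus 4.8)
The plan is to decompose $Q$ into pieces that are each easily seen to be linearizable, and then invoke Observation~\ref{obs1} (linearity of the linearization operator). First I would exploit the block structure induced by the biconnected components. Write $V(G)$'s edge set $E$ as the disjoint union of the edge sets $E_1,\dots,E_k$ of the biconnected components (every edge of a simple graph lies in exactly one biconnected component). For a pair $(I,J)$ with $I\neq J$, the hypothesis gives vectors $a^{IJ}$ (indexed by $E_I$) and $b^{IJ}$ (indexed by $E_J$) with $q(e,f)=a^{IJ}(e)+b^{IJ}(f)$ for $e\in E_I$, $f\in E_J$; for $I=J$ we get $q(e,f)=a^{II}(e)+a^{II}(f)$ for $e\neq f$ in $E_I$, with the diagonal unconstrained. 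Since by Observation~\ref{obs2} the diagonal of $Q$ is irrelevant to linearizability, I would first subtract off the diagonal and assume $q(e,e)=0$.

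Next I would argue that it suffices to handle a single ``rank-one-type'' building block: for a fixed ordered pair $(I,J)$ of (not necessarily distinct) components and a fixed edge $e_0\in E_I$, consider the matrix $R$ that is zero everywhere except that its row indexed by $e_0$ restricted to columns $E_J$ equals some vector $v$ (and symmetrically its column $e_0$ restricted to rows $E_J$ equals $v$, to keep things symmetric). Summing such blocks over all choices of $e_0$ and collecting the $a$- and $b$-vectors reconstructs $Q$ (up to the diagonal and up to skew-symmetric corrections, which are harmless by Observation~\ref{obs2}). Concretely, the key claim is: a symmetric matrix whose only nonzero entries lie in row $e_0$ and column $e_0$, with values $v(f)$ for $f\neq e_0$, is linearizable with an explicit linearization. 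For such a matrix, $Q(T) = 2\sum_{f\in T,\, f\neq e_0} v(f)$ if $e_0\in T$, and $0$ otherwise; so I must produce $c$ with $\sum_{f\in T} c(f)$ matching this. The natural candidate is $c(f) = \alpha(f)$ for $f\neq e_0$ and $c(e_0)=\beta$ chosen so that the ``$e_0\in T$'' case works out, which forces a relation that can be verified using the fact that $e_0$ together with a spanning tree of $G$ either forms a tree or creates a unique cycle — i.e., via Theorem~\ref{thm:cycle} / its Corollary applied to the relevant biconnected component. Actually the cleanest route is: the restriction of the problem to distinguishing ``does $T$ contain $e_0$'' is governed entirely by the biconnected component containing $e_0$, on which the cycle-linearization of Theorem~\ref{thm:cycle} (extended, as the paper notes, to arbitrary spanning-tree objective functions) applies.

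The main obstacle I anticipate is bookkeeping the off-diagonal symmetric reduction carefully: the submatrix hypotheses are stated for the rectangular blocks $(I,J)$, and one must check that summing the chosen linearizations of all the building blocks actually reproduces $Q$ on every off-diagonal entry exactly once, with the cross-terms between different component-pairs not interfering — this is where a clean indexing of ``which block contributes to entry $(e,f)$'' is essential. Once that accounting is done, linearizability follows from Observation~\ref{obs1}, and the $O(|E|^2)$ bound is immediate: there are $O(|E|^2)$ entries, each block's $a,b$ vectors are read off in time proportional to the block size, the per-block linearization is an explicit formula (constant work per edge, as in~\eqref{eq:cycle} restricted to a component), and summing $O(|E|)$ vectors of length $O(|E|)$ costs $O(|E|^2)$. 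I would present the proof by (i) reducing to symmetric $Q$ with zero diagonal, (ii) decomposing $Q = \sum_{I\le J} Q^{IJ}$ where $Q^{IJ}$ is supported on the $(I,J)\cup(J,I)$ blocks, (iii) further writing each $Q^{IJ}$ as a sum of the single-row building blocks above plus a skew-symmetric remainder, (iv) linearizing each building block explicitly via the cycle argument, and (v) combining via Observation~\ref{obs1} and tallying the running time.
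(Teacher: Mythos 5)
There is a genuine gap at step (iv). Your building block $R$ — a symmetric matrix that is zero except that row and column $e_0$ carry a general vector $v$ — is \emph{not} linearizable in general, so Observation~\ref{obs1} cannot be applied to your decomposition. Concretely, take $G=K_4$ (a single biconnected component), fix an edge $e_0$, and let $v$ be non-constant on the other five edges. A linearization $c$ would have to give every spanning tree avoiding $e_0$ cost $0$; since $K_4$ minus an edge is still biconnected, Proposition~\ref{prop:mst_covp} forces $c(f)=0$ for all $f\neq e_0$, and then the condition for trees containing $e_0$ forces $\sum_{f\in T\setminus\{e_0\}}v(f)$ to be the same for all such $T$ — false for non-constant $v$. (This is also consistent with Theorem~\ref{thm:complete}: your $R$ is a weak sum matrix only when $v$ is constant.) The appeal to Theorem~\ref{thm:cycle} does not rescue this, because a biconnected component is not a cycle in general and the indicator of ``$e_0\in T$'' multiplied by a non-constant edge sum is genuinely quadratic. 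In your decomposition the row vector attached to $e_0$ is $v(f)=a^{IJ}(e_0)+b^{IJ}(f)$, which is non-constant precisely because of the $b$-part, so the failure occurs already for generic instances satisfying the lemma's hypothesis.

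The repair is to split each sum-matrix block into its row-constant and column-constant parts \emph{before} forming building blocks, which is what the paper does: write $Q=A+A^T+D$ as in \eqref{eq:sep}, where $A(e,f)$ depends only on $e$ and on the biconnected component containing $f$ (put the $a^{IJ}$-part of block $(I,J)$ into $A$ and let $A^T$ supply the $b^{IJ}$-part via symmetry of $Q$). Then each row of $A$ is constant on every biconnected component, so by Proposition~\ref{prop:mst_covp} the linear MSTP it defines has the same value $r(e)$ on every spanning tree; hence $\sum_{f\in T}A(e,f)=r(e)$ independently of $T$, and $Q(T)=\sum_{e\in T}\bigl(2r(e)+d(e,e)\bigr)$ gives the linearization \eqref{eq:lin} directly. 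No cycle argument and no per-block linearizability claim is needed; your reduction to zero diagonal and your bookkeeping of which block contributes to which entry are fine, but the heart of the proof is the constancy of the row sums guaranteed by Proposition~\ref{prop:mst_covp}, which your proposal never actually secures.
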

\begin{proof}
	Let $Q$ be a symmetric matrix that satisfies the hypothesis of the lemma. Then $Q$ can be expressed as 
\begin{equation}\label{eq:sep}
Q=A+A^T+D,
\end{equation}
where $D=(d(i,j))$ is a diagonal matrix, and matrix $A=(a(i,j))$ has the property that $a(i,j)=a(i,k)$ if $j$ and $k$ are edges from the same biconnected component. Note that matrices $A$ and $D$ can be found in $O(|E|^2)$ time. From Proposition~\ref{prop:mst_covp} it follows that an MSTP instance defined by any row of $A$ (i.e.\@ for some fixed row $i$ we define the length of an edge $j$ to be $a(i,j)$) has the property that every spanning tree has the same cost. Let $r(i)$ denotes the constant objective function value of the MSTP corresponding to the $i$-th row of $A$.
Then the objective value of the QMSTP for some spanning tree $T$ is
\begin{align*}
	Q(T)&=\sum_{e\in T}\sum_{f\in T}(a(e,f)+a(f,e)+d(e,f))\\
	&=\sum_{e\in T}\sum_{f\in T}(a(e,f)+a(f,e))+\sum_{e\in T}d(e,e)\\
	&=\sum_{e\in T}(r(e)+r(e))+\sum_{e\in T}d(e,e)\\
	&=\sum_{e\in T}(2r(e)+d(e,e)).
\end{align*}
Hence, by setting 
\begin{equation}\label{eq:lin}
	c(i):=2r(i)+d(i,i)
\end{equation}
we obtain a linearization of $Q$. Note that the choice of matrix $A$ is not always uniquely determined, hence a linearization is not necessarily unique.
\end{proof}

Next, we present characterizations of linearizable instances of QMSTP for some special types of graphs. We start with the complete graph. In this case, the linearization characterization shows that $Q$ must be a weak sum matrix, which, according to Lemma~\ref{lm:sum}, is the most restrictive possible characterization.

\begin{theorem}\label{thm:complete}
	A symmetric cost matrix $Q$ of the QMSTP on a complete graph $K_n$ is linearizable if and only if it is a symmetric weak sum matrix. Further, a linearization of a linearizable symmetric matrix $Q$ is given by \eqref{eq:lin}.
\end{theorem}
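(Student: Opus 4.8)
The "if" direction is immediate from Lemma~\ref{lm:sum}: a complete graph $K_n$ is itself biconnected, so its only biconnected component is all of $E$, and the hypothesis of the lemma (a symmetric weak sum matrix on that single block) is exactly the assumption here; the linearization is then given by \eqref{eq:lin}. So the work is entirely in the "only if" direction, and my plan is to show that if $Q$ is linearizable on $K_n$, then $q(e,f)$ must depend on $\{e,f\}$ only through an additive decomposition $q(e,f)=a(e)+a(f)$ for $e\neq f$.

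The approach I would take is to exploit the richness of the spanning-tree family of $K_n$ to isolate individual quadratic terms. Suppose $C=(c(1),\dots,c(m))$ is a linearization, so $Q(T)=C(T)$ for every spanning tree $T$. The key trick is to take two spanning trees $T$ and $T'$ that differ in exactly one edge: $T'=T-e+g$ where $e\in T$, $g\notin T$, and $T-e+g$ is again a tree (an edge swap across a fundamental cut). Subtracting $Q(T)=C(T)$ from $Q(T')=C(T')$ makes all the common terms cancel, and one is left with an identity relating $c(g)-c(e)$ to a sum of cross terms $q(g,f)-q(e,f)$ over $f\in T\setminus\{e\}$ plus the diagonal difference. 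By then doing a second swap from a different tree sharing most of its edges, or by comparing two such swap-identities that share the edge being removed, the long sums $\sum_{f}q(\cdot,f)$ cancel against each other and one extracts a "four-point" relation of the form $q(e,f)+q(g,h)=q(e,h)+q(g,f)$ for suitable quadruples of edges — and a matrix whose off-diagonal entries satisfy such a relation universally is exactly a weak sum matrix. Concretely, to get such a relation I would pick a small tree (say a path or a star on a handful of vertices) and two disjoint edges $e,f$ inside it, then swap $e$ for some $e'$ and $f$ for some $f'$ in all four combinations; because $K_n$ is complete every needed edge exists and every needed swap yields a tree, so all four equations are valid, and their alternating sum collapses to the four-point identity. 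Finally I would invoke the standard fact (essentially the remark following the sum-matrix definition in the Preliminaries) that a symmetric matrix all of whose off-diagonal $2\times 2$ "rectangles" have zero alternating sum is a symmetric weak sum matrix, i.e. $q(e,f)=a(e)+a(f)$ for $e\neq f$; this pins down $Q$ up to its diagonal, which is unconstrained, giving exactly a symmetric weak sum matrix. The formula \eqref{eq:lin} for the linearization then follows from Lemma~\ref{lm:sum} (or by direct substitution, noting $r(i)=\sum_{j\neq i}a(j)$ up to the constant common tree-length).

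By Observations~\ref{obs1}--\ref{obs3} I may and do assume $Q$ is symmetric throughout, and I may further normalize by subtracting a skew-symmetric and diagonal part, though for the complete-graph argument it is cleanest to keep $Q$ symmetric and argue directly on edge swaps.

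The main obstacle I anticipate is bookkeeping: making sure that the chosen trees and the four edge swaps are all simultaneously valid (each intermediate graph is a spanning tree of $K_n$) and that the alternating sum of the four linearization equations really does cancel everything except the single four-point term — in particular the diagonal contributions $q(e,e)$ must also cancel, which they do precisely because each of $e,f,e',f'$ is added in exactly two of the four trees and removed in two. One also needs $n$ large enough that such configurations exist; for very small $n$ (e.g. $n\le 3$) every matrix is trivially a weak sum matrix on the cycle by Theorem~\ref{thm:cycle}, so these degenerate cases should be checked separately but are immediate. Once the four-point identities are in hand, converting them to the additive form $a(e)+a(f)$ is routine (fix a reference edge and set $a(e):=q(e,e_0)-\tfrac12 q(e_0,e_0')$ type formulas), so the combinatorial swap argument is the crux.
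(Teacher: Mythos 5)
Your overall strategy is the same as the paper's: the ``if'' direction via Lemma~\ref{lm:sum}, and for ``only if'', comparing four spanning trees of the form $B\cup\{e,f\}$ over a common edge set $B$ so that everything cancels except a single four-point relation $q(b,x)-q(a,x)=q(b,y)-q(a,y)$ --- this is precisely the backbone mechanism the paper isolates in Lemma~\ref{lm:back}. The normalization you suggest (fix a reference edge, reduce to showing all off-diagonal rectangles vanish) is also how the paper proceeds, subtracting a sum matrix so that the first row, first column and diagonal are zero.

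The one place where your plan is too optimistic is the assertion that ``because $K_n$ is complete every needed edge exists and every needed swap yields a tree, so all four equations are valid.'' That all four of $B\cup\{e,f\}$, $B\cup\{e,f'\}$, $B\cup\{e',f\}$, $B\cup\{e',f'\}$ are simultaneously spanning trees for a \emph{single} $B$ is exactly the nontrivial combinatorial content of the proof, and it genuinely fails for some quadruples: the paper exhibits a configuration (Figure~\ref{fig1}\subref{fig1d}, where the two reference edges are adjacent and there are further adjacencies among the four edges) for which no such common backbone exists. The paper's fix is twofold: it uses the freedom to choose the reference pair nonadjacent (given a violating pair $q(b,x)\neq q(b,y)$ in a row $b$, one may always relabel so that the reference edge and $x$ share no vertex), and it then runs a case analysis over the adjacency patterns of the four edges, in each case constructing two feasible backbone cycles $C_1\supseteq\{a,b\}$ and $C_2\supseteq\{x,y\}$ whose union minus any pair $e\in\{a,b\}$, $f\in\{x,y\}$ is acyclic. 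You flag this as ``bookkeeping,'' but it is the crux rather than a routine verification; without it the four-point identities are established only for a restricted set of quadruples, and one would then need the chaining device of Lemma~\ref{lm:back} (intermediate edges $z_i$) to propagate them to all off-diagonal positions. Everything else in your outline --- the cancellation of the diagonal terms, the degenerate cases $n\le 3$, and the conversion of the rectangle identities into $q(e,f)=a(e)+a(f)$ --- matches the paper and is fine.
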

\begin{proof}
If $Q$ is a weak sum matrix, then from Lemma~\ref{lm:sum} it follows that $Q$ is linearizable and a linearization is given by \eqref{eq:lin}. Note that in the case of a complete graph $K_n$, entries of the matrix $A$ in the expression \eqref{eq:sep} are the same for every fixed row. Hence, $r(i)=(n-1)a(i,j)$ for any column $j$.

	Next we assume that $Q$ is linearizable. For $n\leq 3$ every corresponding symmetric square cost matrix is a symmetric weak sum matrix, hence we can assume that  $n\geq 4$.

Consider an ${n\choose 2}\times {n \choose 2}$ sum matrix $M=(m(i,j))$ of the form $m(i,j)=a(i)+a(j)$, where $a(1)=0$ and $a(i)=q(i,1)$ for $i=2,\ldots, {n \choose 2}$.  By subtracting $M$ and an appropriate diagonal matrix from $Q$, we could obtain zeros on the first row, the first column and the diagonal. By Lemma~\ref{lm:sum} $M$ is linearizable, and furthermore, any diagonal matrix is linearizable. Hence, from Observation~\ref{obs1} it follows that without loss of generality we can assume that elements of the first row, the first columns and the diagonal of $Q$ are equal to zero. In that case, $Q$ is a weak sum matrix if and only if all elements of $Q$ that are not in the first row, the first column or on the diagonal, have the same value.

Now we assume the contrary, i.e. there are two elements of $Q$ (not in the first row/column or on the diagonal) that have different values. Moreover, due to the symmetry of $Q$, there is a row $b$ that contains such two distinct value elements $q(b,x)$ and $q(b,y)$. As any element of row $b$ (except $q(b,1)$ and $q(b,b)$) can be a member of such pair, without loss of generality we can assume that edges 
 1 and $x$ are nonadjacent.

Next show that there exists a cycle $C_1$ that contains edges $1$ and $b$, and a cycle $C_2$ that contains edges $x$ and $y$ with the following property:  $C_1\cup C_2\setminus \{e,f\}$ does not contains a cycle for all $e\in\{1,b\}$, $f\in\{x,y\}$. Namely, in the case when there are no two pairs of edges from $\{1,b\}\times\{x,y\}$ that are adjacent, it is straightforward to construct $C_1$ and $C_2$ that are edge disjoint and satisfy the above property, see Figure~\ref{fig1}\subref{fig1a}.
\begin{figure}[h]
	 \centering
        \begin{subfigure}[b]{0.18\textwidth}
		\includegraphics[width=\textwidth]{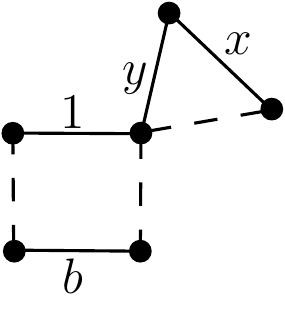}
              \caption{}
              \label{fig1a}
        \end{subfigure}\qquad
	 \begin{subfigure}[b]{0.14\textwidth}
              \includegraphics[width=\textwidth]{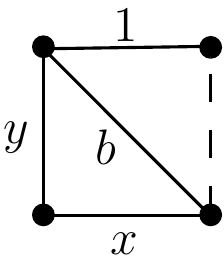}
              \caption{}
              \label{fig1b}
        \end{subfigure}\qquad
	\begin{subfigure}[b]{0.14\textwidth}
              	 \includegraphics[width=\textwidth]{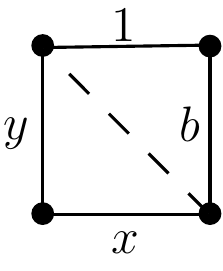}
              \caption{}
              \label{fig1c}
        \end{subfigure}\qquad
	\begin{subfigure}[b]{0.14\textwidth}
              	 \includegraphics[width=\textwidth]{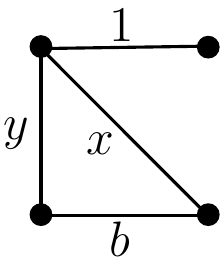}
              \caption{}
              \label{fig1d}
        \end{subfigure}
	\caption{Configurations of $\{1,b,x,y\}$ and corresponding $C_1\cup C_2$}
	\label{fig1}
\end{figure}
In the case when there are at least two pairs of edges from $\{1,b\}\times\{x,y\}$ that are adjacent, every possible $1,b,x,y$ configuration instances can be reduced to only two cases presented in Figure~\ref{fig1}\subref{fig1b} and Figure~\ref{fig1}\subref{fig1c}.
These reductions consist of symmetries (defined by exchanging sets $\{1,b\}$ and $\{x,y\}$, and by exchanging elements inside of those two sets), and edge contractions (that can induce more incidences and only make the case more complicated). These configurations in Figure~\ref{fig1}\subref{fig1b} and Figure~\ref{fig1}\subref{fig1c} are extended with a (dashed) edge that constitutes feasible $C_1$ and $C_2$. Note that we used the fact that 1 and $x$ are nonadjacent, otherwise there are instance for which $C_1$ and $C_2$ with the property above do not exist, see Figure~\ref{fig1}\subref{fig1d}.

Let $T$ be a minimum cardinality set of edges of a tree connected to both $C_1$ and $C_2$ that spans the remaining vertices. Then we define $B$ to be $T\cup C_1\cup C_2 \setminus \{1,b,x,y\}$. It is easy to see that $B$ extended by any two edges $e\in\{1,b\}$ and $f\in\{x,y\}$  forms a spanning tree.

Let $C=(c(i))$ be a cost vector that linearizes $Q$. Since both $B\cup\{b,x\}$ and $B\cup\{1,x\}$ form a spanning tree, we have that
\[
	C(B\cup\{b,x\})-C(B\cup\{1,x\})=\sum_{e\in B\cup\{b,x\}}c(e)-\sum_{e\in B\cup\{1,x\}}c(e)=c(b)-c(1).
\]
Analogously, $C(B\cup\{b,y\})-C(B\cup\{1,y\})=c(b)-c(1)$, hence
\begin{equation}\label{eq1}
	 Q(B\cup\{b,x\})-Q(B\cup\{1,x\})=Q(B\cup\{b,y\})-Q(B\cup\{1,y\}).
\end{equation}
Now let us express the cost of the spanning tree $B\cup\{b,x\}$ in terms of the quadratic cost matrix $Q$. Since $q(e,e)=0$ $\forall e,$ we have
\begin{align*}
	Q(B\cup\{b,x\})&=\sum_{e\in B\cup\{b,x\}}\sum_{f\in B\cup\{b,x\}}q(e,f)\\
	&=\sum_{e\in B}\sum_{f\in B}q(e,f)+\sum_{e\in B}2q(b,e)+\sum_{e\in B}2q(x,e)+2q(b,x).
\end{align*}
Since $q(1,e)=0$ $\forall e$ we analogously have
\[
	Q(B\cup\{1,x\})=\sum_{e\in B}\sum_{f\in B}q(e,f)+\sum_{e\in B}2q(x,e).
\]
Therefore
\begin{equation}\label{eq2}
	Q(B\cup\{b,x\})-Q(B\cup\{1,x\})=\sum_{e\in B}2q(b,e)+2q(b,x).
\end{equation}
Analogously
\begin{equation}\label{eq3}
	Q(B\cup\{b,y\})-Q(B\cup\{1,y\})=\sum_{e\in B}2q(b,e)+2q(b,y).
\end{equation}
Then from \eqref{eq1}, \eqref{eq2} and \eqref{eq3} follows that $q(b,x)=q(b,y)$ which is a contradiction to our choice of $b,x$ and $y$.
\end{proof}

Next we will generalize the approach above to obtain a characterization of linearizable cost matrices for QMSTP for a more general class of graphs (see Theorem~\ref{thm:gen}). First we present a tool which is used to prove such characterizations.

\begin{definition}
	Let $a,b,x$ and $y$ be distinct edges of a simple graph $G$ with $n$ vertices. We say that a set $B$ of $n-3$ edges is an \emph{$\{a,b\}$-$\{x,y\}$-backbone} of $G$ if adding any two edges $e\in\{a,b\}$ and $f\in\{x,y\}$ to $B$ generates a spanning tree of $G$.
\end{definition}

\begin{lemma}\label{lm:back}
	Let $Q$ be a linearizable symmetric cost matrix of the QMSTP on a simple graph $G$, and let $a$ and $x$ be two fixed distinct edges of $G$. If for all additional edges $b$ and $y$ there exist a sequence of $k\geq2$ edges $z_1,z_2,\ldots,z_k$ such that $x=z_1$, $y=z_k$ and there exists an $\{a,b\}$-$\{z_i,z_{i+1}\}$-backbone $B_i$ for every $i=1,\ldots, k-1$, then $Q$ is a symmetric weak sum matrix.
\end{lemma}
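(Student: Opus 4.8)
The plan is to imitate the final part of the proof of Theorem~\ref{thm:complete}, replacing the single backbone $B$ used there by the chain of backbones $B_1,\dots,B_{k-1}$ supplied by the hypothesis. First I would reduce to a normalized form of $Q$. Exactly as in Theorem~\ref{thm:complete}, subtract from $Q$ the symmetric sum matrix $M=(m(i,j))$ with $m(i,j)=a(i)+a(j)$, $a(1)=0$, $a(i)=q(i,1)$, together with a suitable diagonal matrix. By Lemma~\ref{lm:sum} the matrix $M$ is linearizable, every diagonal matrix is linearizable, and by Observation~\ref{obs1} the difference is still linearizable; so without loss of generality I may assume that the first row, the first column and the diagonal of $Q$ are all zero. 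Under this normalization $Q$ is a (symmetric) weak sum matrix if and only if all off-diagonal entries outside the first row and first column are equal. Here I would take the fixed edge $a$ of the lemma to play the role that the edge $1$ plays in the normalization, i.e.\ I normalize relative to $a$ rather than relative to edge $1$; this is legitimate because $a$ is an arbitrary fixed edge. So after normalization $q(a,e)=0$ for all $e$, $q(e,e)=0$ for all $e$, and $q$ is symmetric.

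Next, fix arbitrary additional edges $b$ and $y$; I must show $q(b,x)=q(b,y)$, since $x$ is also fixed and ranging $b$ over all remaining edges with $y:=x$ shows all relevant entries equal a common value. Let $z_1=x,z_2,\dots,z_k=y$ and the backbones $B_i$ be as in the hypothesis. Fix $i$ and work with $B=B_i$, an $\{a,b\}$-$\{z_i,z_{i+1}\}$-backbone: all four of $B\cup\{a,z_i\}$, $B\cup\{a,z_{i+1}\}$, $B\cup\{b,z_i\}$, $B\cup\{b,z_{i+1}\}$ are spanning trees. Since $Q$ is linearizable with linearization $C$, the computation in Theorem~\ref{thm:complete} gives
\[
Q(B\cup\{b,z_i\})-Q(B\cup\{a,z_i\})=C(B\cup\{b,z_i\})-C(B\cup\{a,z_i\})=c(b)-c(a),
\]
and the same with $z_i$ replaced by $z_{i+1}$, so
\[
Q(B\cup\{b,z_i\})-Q(B\cup\{a,z_i\})=Q(B\cup\{b,z_{i+1}\})-Q(B\cup\{a,z_{i+1}\}).
\]
Now expand each cost in terms of $Q$ exactly as in \eqref{eq2}–\eqref{eq3}, using $q(a,e)=0$ and $q(e,e)=0$: the left-hand side equals $\sum_{e\in B}2q(b,e)+2q(b,z_i)$ and the right-hand side equals $\sum_{e\in B}2q(b,e)+2q(b,z_{i+1})$. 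The sums over $B$ cancel, yielding $q(b,z_i)=q(b,z_{i+1})$. Chaining this identity over $i=1,\dots,k-1$ gives $q(b,x)=q(b,z_1)=q(b,z_2)=\dots=q(b,z_k)=q(b,y)$, which is what we needed; hence the normalized $Q$, and therefore the original $Q$, is a symmetric weak sum matrix.

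The only real subtlety — and the single step I would flag as the main obstacle — is bookkeeping with the normalization: one must check that the reduction "first row/column and diagonal are zero" can be carried out with respect to the arbitrary fixed edge $a$ rather than a distinguished edge labelled $1$, and that the telescoping argument only ever uses the entries $q(b,z_i)$ and the (now vanishing) entries $q(a,\cdot)$ and $q(\cdot,\cdot)$ on the diagonal, so that no hidden dependence on the labelling of the remaining edges creeps in. Everything else is the same algebra as in Theorem~\ref{thm:complete}, applied $k-1$ times; the definition of backbone is tailored precisely so that equation~\eqref{eq1} and its expansions \eqref{eq2}, \eqref{eq3} go through verbatim with $(1,B)$ replaced by $(a,B_i)$ and $(x,y)$ replaced by $(z_i,z_{i+1})$.
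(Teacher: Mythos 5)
Your proof is correct and follows essentially the same route as the paper's: each backbone $B_i$ yields four spanning trees whose $Q$-costs, compared against the linearization $C$, give one telescoping identity $q(b,z_i)-q(a,z_i)=q(b,z_{i+1})-q(a,z_{i+1})$, and chaining over $i$ plus symmetry gives the weak-sum form. The only difference is cosmetic: you first normalize so that row/column $a$ and the diagonal vanish (borrowing the reduction from the proof of Theorem~\ref{thm:complete}), whereas the paper keeps the $q(a,\cdot)$ terms and concludes $q(b,y)=q(b,x)+q(a,y)-q(a,x)$ directly.
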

\begin{proof}
	Let $Q$ be linearizable and let $a,b,x,y$ be four distinct edges such that there exists $\{a,b\}$-$\{x,y\}$-backbone $B$. Since $Q$ is linearizable it follows that
\begin{equation}\label{back1}
	 Q(B\cup\{b,x\})-Q(B\cup\{a,x\})=Q(B\cup\{b,y\})-Q(B\cup\{a,y\}).
\end{equation}
Namely, by expressing spanning tree objective values from \eqref{back1} with a linearization costs $C=(c(i))$, one gets $c(b)-c(a)=c(b)-c(a)$. However, by expressing spanning tree objective vales from \eqref{back1} with quadratic  costs $Q=(q(i,j))$, one gets
\begin{equation*}
	q(b,x)-q(a,x)=q(b,y)-q(a,y).
\end{equation*}

Now assume that $a$ and $x$ are fixed and there exist edges $b,y$ and $z_1,\ldots,z_k$
with $z_1=x,z_k=y$, such that there exists $\{a,b\}$-$\{z_i,z_{i+1}\}$-backbone $B_i$ for every $i=1,\ldots, k-1$. Then by the same reasoning as above for all $i=1,\ldots,k-1$, we obtain the following system of equations:
\begin{align*}
	q(b,x)-q(a,x)&=q(b,z_2)-q(a,z_2),\\
	q(b,z_2)-q(a,z_2)&=q(b,z_3)-q(a,z_3),\\
		&\hspace{6pt} \vdots \\
	q(b,z_{k-1})-q(a,z_{k-1})&=q(b,y)-q(a,y).
\end{align*}
As the right-hand side of every $i$-th equation is identical to the left-hand side of $(i+1)$-th equation, it follows that $q(b,x)-q(a,x)=q(b,y)-q(a,y)$, which can be rearranged to
\begin{equation}\label{proof1}
	q(b,y)=q(b,x)+q(a,y)-q(a,x).
\end{equation}
Note that \eqref{proof1} is satisfied also for $b=a$ or $y=x$.

By the assumption of the lemma, we can obtain \eqref{proof1} for all $b$ and $y$, therefore it follows that $q(b,y)$ is a sum of a function of $b$ and a function of $y$ (as $a$ and $x$ are fixed), i.e.
\begin{equation*}
q(i,j)=s(i)+t(j)\qquad \forall i\neq j,
\end{equation*}
for some vectors $s=(s(i))$ and $t=(t(i))$.
As $Q$ is symmetric it follows that
\begin{equation*}
q(i,j)=w(i)+w(j)\qquad \forall i\neq j,
\end{equation*}
for some vector $w=(w(i))$, which proves the lemma.
\end{proof}

Note that Theorem~\ref{thm:complete} can be proven using Lemma~\ref{lm:back} and the fact that if $a$ and $x$ are two nonadjacent edges of a complete graph, then for any other pair of edges $b$ and $y$ there exists an $\{a,b\}$-$\{x,y\}$-backbone.

\begin{corollary}\label{cor:back}
	Let $Q$ be a linearizable symmetric cost matrix of the QMSTP on a simple graph $G$. Let $I$ and $J$ be two disjoint sets of edges of $G$, and let $a\in I$ and $x\in J$ be two fixed edges. Let $Q_{IJ}$ be the submatrix of $Q$ defined by rows $I$ and columns $J$. If for all additional edges $b\in I$ and $y\in J$ there exist a sequence of $k\geq2$ edges $z_1,z_2,\ldots,z_k$ such that $x=z_1$, $y=z_k$ and there exists an $\{a,b\}$-$\{z_i,z_{i+1}\}$-backbone $B_i$ for every $i=1,\ldots, k-1$, then $Q_{IJ}$ is a sum matrix.
\end{corollary}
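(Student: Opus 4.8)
The plan is to mimic the proof of Lemma~\ref{lm:back} almost verbatim, but to restrict attention to the submatrix $Q_{IJ}$ so that the edges playing the roles of ``row indices'' always come from $I$ and those playing the roles of ``column indices'' always come from $J$. Concretely, given a linearizable symmetric $Q$ and the fixed edges $a\in I$, $x\in J$, I would first observe that for any single $\{a,b\}$-$\{z_i,z_{i+1}\}$-backbone $B_i$ with $b\in I$ and $z_i,z_{i+1}\in J$, the four sets $B_i\cup\{b,z_i\}$, $B_i\cup\{a,z_i\}$, $B_i\cup\{b,z_{i+1}\}$, $B_i\cup\{a,z_{i+1}\}$ are all spanning trees, so the linearization identity $c(b)-c(a)=c(b)-c(a)$ together with the quadratic expansion (using only symmetry of $Q$, exactly as in the displayed computation \eqref{eq2}--\eqref{eq3} of Theorem~\ref{thm:complete}) yields $q(b,z_i)-q(a,z_i)=q(b,z_{i+1})-q(a,z_{i+1})$.

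Next I would chain these equalities along the sequence $z_1=x,\dots,z_k=y$, telescoping exactly as in Lemma~\ref{lm:back}, to conclude $q(b,x)-q(a,x)=q(b,y)-q(a,y)$, i.e.
\[
q(b,y)=q(b,x)+q(a,y)-q(a,x)\qquad\text{for all }b\in I,\ y\in J,
\]
which also trivially holds for $b=a$ or $y=x$. Since $a$ and $x$ are fixed, the right-hand side is of the form $s(b)+t(y)$ with $s(b):=q(b,x)-\tfrac12 q(a,x)$ and $t(y):=q(a,y)-\tfrac12 q(a,x)$ (any such split works), so every entry $q(b,y)$ of $Q_{IJ}$ with $b\in I$, $y\in J$ is a sum of a function of its row and a function of its column. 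That is precisely the statement that $Q_{IJ}$ is a sum matrix. Note that, unlike in Lemma~\ref{lm:back}, we do \emph{not} invoke symmetry of $Q$ to further collapse $s$ and $t$ into a single vector $w$ — indeed $I$ and $J$ are disjoint and $Q_{IJ}$ need not be square, so a genuine (not weak) sum matrix is the right conclusion.

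The main thing to be careful about — and the only real obstacle — is verifying that the quadratic-expansion bookkeeping goes through with $b,a\in I$ and $z_i,z_{i+1}\in J$ even when some of the edges $a,b,z_i,z_{i+1}$ are adjacent in $G$ or when $I\cap(B_i)\neq\emptyset$, etc.; but this is exactly the computation already carried out in Theorem~\ref{thm:complete} and Lemma~\ref{lm:back}, where the only facts used are that the four relevant edge sets are spanning trees, that $Q$ is symmetric, and that diagonal entries contribute identically to both sides (so they cancel in the differences). Since the hypothesis hands us the backbones $B_i$ directly, no combinatorial construction of backbones is needed here (contrast the delicate Figure~\ref{fig1} analysis in Theorem~\ref{thm:complete}), and the proof reduces to transcribing the argument of Lemma~\ref{lm:back} with the trivial sum-matrix split replacing the symmetric-sum-matrix split at the very end.
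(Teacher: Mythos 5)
Your proposal is correct and matches the paper's intent exactly: the paper's own proof of Corollary~\ref{cor:back} is just the one-line remark that it is ``similar as that of Lemma~\ref{lm:back}'', and you have spelled out precisely that adaptation, including the one genuine difference (at the end one keeps the asymmetric split $q(b,y)=s(b)+t(y)$ rather than invoking symmetry to collapse to a single vector, since $Q_{IJ}$ is an off-diagonal, generally non-square block). The telescoping along $z_1,\ldots,z_k$ and the cancellation of the diagonal and backbone terms in the differences are handled correctly.
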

\begin{proof}
	The proof is similar as that of Lemma~\ref{lm:back}.
\end{proof}

In the majority of cases we use Lemma~\ref{lm:back} and Corollary~\ref{cor:back}, $k$ will be equal to 2, i.e.\@ we will not need additional edges $z_i$.

Given the edges $a$, $b$, $x$ and $y$, usually we try to build an $\{a,b\}$-$\{x,y\}$-backbone in the following way. We aim to find a cycle $C_1$ that contains $a$ and $b$ and a cycle $C_2$ that contains $x$ and $y$, such that if intersection of $C_1$ and $C_2$ is nonempty, then it is connected and does not contains a pair of edges from $\{a,b\}\times\{x,y\}$.
We call such $C_1$ and $C_2$ as \emph{feasible backbone cycles} for $a,b,x$, and $y$.
For feasible backbone cycles $C_1$ and $C_2$, $(C_1\setminus \{a,b\}) \cup (C_2\setminus \{x,y\})$ extended by a tree which is connected to $C_1$ and $C_2$ and spans the remaining set of vertices, forms an $\{a,b\}$-$\{x,y\}$-backbone.

\begin{lemma}\label{lm:bipartite}
	A symmetric cost matrix $Q$ of the QMSTP on a complete bipartite graph $K_{n_1,n_2}$ with $\min\{n_1,n_2\}\geq 3$, is linearizable if and only if $Q$ is a symmetric weak sum matrix. A linearization of a lineariable symmetric matrix $Q$ is given by \eqref{eq:lin}.
\end{lemma}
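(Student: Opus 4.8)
The plan is to mirror the structure of the proof of Theorem~\ref{thm:complete}, now using the more general machinery of Lemma~\ref{lm:back} together with the ``feasible backbone cycles'' construction just introduced. The ``if'' direction is immediate: a complete bipartite graph is biconnected, so a symmetric weak sum matrix $Q$ satisfies the hypothesis of Lemma~\ref{lm:sum} (the single biconnected component gives the weak-sum requirement on the diagonal block), whence $Q$ is linearizable and \eqref{eq:lin} is a linearization. For the ``only if'' direction, suppose $Q$ is linearizable. By Lemma~\ref{lm:back} it suffices to exhibit two fixed distinct edges $a$ and $x$ of $K_{n_1,n_2}$ such that for every further pair of edges $b,y$ there is an $\{a,b\}$-$\{x,y\}$-backbone (taking $k=2$, so no intermediate edges $z_i$ are needed); then $Q$ is a symmetric weak sum matrix, and the linearization formula again follows from Lemma~\ref{lm:sum} applied to the resulting weak sum matrix.

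The core of the argument is therefore the following combinatorial claim: in $K_{n_1,n_2}$ with $\min\{n_1,n_2\}\ge 3$, one can pick $a$ and $x$ so that every $b,y$ admits feasible backbone cycles. Write the bipartition as $U\cup W$ with $|U|=n_1$, $|W|=n_2$, and label $U=\{u_1,u_2,\dots\}$, $W=\{w_1,w_2,\dots\}$. I would fix $a=u_1w_1$ and $x=u_2w_2$; note these two edges are vertex-disjoint, which is exactly the analogue of the ``$1$ and $x$ nonadjacent'' condition exploited in the proof of Theorem~\ref{thm:complete}. Every edge of $K_{n_1,n_2}$ has one endpoint in $U$ and one in $W$, so $b$ and $y$ each meet $\{u_1,u_2\}$ in at most one vertex and $\{w_1,w_2\}$ in at most one vertex. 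Using that $\min\{n_1,n_2\}\ge 3$ we have spare vertices $u_3\in U$ and $w_3\in W$ available to route cycles around troublesome incidences. The plan is to build a short cycle $C_1$ through $a$ and $b$ and a short cycle $C_2$ through $x$ and $y$: when $b$ is disjoint from $a$ one can take $C_1$ to be the $4$-cycle on the four endpoints of $a$ and $b$; when $b$ shares a vertex with $a$ one uses a $4$-cycle through that shared vertex, $b$'s other endpoint, and one spare vertex on the opposite side. One then does the same for $C_2$ with $x,y$, choosing spare vertices so that the overlap $C_1\cap C_2$ is empty or a single shared vertex (never an edge, and in particular never an edge of $\{a,b\}\times\{x,y\}$). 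Finally, $(C_1\setminus\{a,b\})\cup(C_2\setminus\{x,y\})$ extended by a spanning tree of the remaining vertices hooked onto $C_1\cup C_2$ is the required $\{a,b\}$-$\{x,y\}$-backbone, by the remark preceding this lemma.

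The main obstacle is the bookkeeping in this case analysis: one must verify that for every one of the possible incidence patterns among the up-to-eight endpoints of $a,b,x,y$ (how many vertices of $U$ they share, how many of $W$, whether $b=x$ or $y=a$ etc.), the spare vertices $u_3,w_3$ — and, if several constraints collide, a few more spares guaranteed by $\min\{n_1,n_2\}\ge 3$ together with $n_1,n_2$ not both equal to $3$ only in degenerate subcases — genuinely suffice to keep $C_1$ and $C_2$ feasible. This is the bipartite counterpart of the reduction ``to only two cases'' in Figure~\ref{fig1}, and I would organize it the same way: reduce by the symmetries swapping $U\leftrightarrow$ the roles of $a,b$ versus $x,y$ and permuting within pairs, then handle the two or three essential configurations explicitly, each time pointing to the picture of $C_1\cup C_2$. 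The subtle point to flag — exactly as in Theorem~\ref{thm:complete} — is that feasibility can fail if $a$ and $x$ are chosen adjacent; choosing $a=u_1w_1$, $x=u_2w_2$ disjoint is what makes the construction always go through, and the hypothesis $\min\{n_1,n_2\}\ge3$ is what guarantees the disjoint choice plus the spare vertices exist.
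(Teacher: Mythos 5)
Your ``if'' direction and your overall strategy for the converse (fix two nonadjacent edges $a,x$, then apply Lemma~\ref{lm:back} by building feasible backbone cycles for each $b,y$) coincide with the paper's. The gap is in your central combinatorial claim: it is \emph{not} true that for nonadjacent $a=u_1w_1$, $x=u_2w_2$ every pair $b,y$ admits an $\{a,b\}$-$\{x,y\}$-backbone, so you cannot always take $k=2$ in Lemma~\ref{lm:back}. Take $b=u_2w_3$ and $y=u_1w_3$. Any $\{a,b\}$-$\{x,y\}$-backbone $B$ is a spanning forest with exactly three components $V_1,V_2,V_3$; writing $[v]$ for the index of the component containing $v$ and $P_e=\{[v],[v']\}$ for an edge $e=vv'$, the requirement that all four sets $B\cup\{e,f\}$ be spanning trees forces each of $a,b,x,y$ to join two distinct components and forces $P_e\neq P_f$ for every $e\in\{a,b\}$, $f\in\{x,y\}$. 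Here $P_b$ and $P_y$ both contain $[w_3]$, so $P_b\neq P_y$ forces $[u_1]\neq[u_2]$, say $[u_1]=1$, $[u_2]=2$, and then $[w_3]=3$, giving $P_b=\{2,3\}$ and $P_y=\{1,3\}$; next $P_a=\{1,[w_1]\}\neq P_y$ forces $P_a=\{1,2\}$; but then $P_x=\{2,[w_2]\}$ would have to differ from both $\{1,2\}$ and $\{2,3\}$, which is impossible. So no backbone exists for this configuration, no matter how many spare vertices $\min\{n_1,n_2\}\geq 3$ provides.

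This is precisely why the paper's proof invokes Lemma~\ref{lm:back} with $k=3$ for several of the configurations in Figure~\ref{fig:bip}: for incidence patterns like the one above it interpolates an auxiliary edge $z$ (here, e.g., $z=u_3w_2$) and exhibits an $\{a,b\}$-$\{x,z\}$-backbone and an $\{a,b\}$-$\{z,y\}$-backbone separately, chaining $q(b,x)-q(a,x)=q(b,z)-q(a,z)=q(b,y)-q(a,y)$. That intermediate-edge step is an essential extra idea, not the ``bookkeeping'' you defer: carrying out your case analysis would run into configurations on which the $k=2$ construction provably fails. To repair the argument you must classify the incidence patterns of $a,b,x,y$, produce feasible backbone cycles directly where possible, and for the remaining patterns choose a suitable $z$ and verify two backbones --- which is exactly the structure of the paper's proof.
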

\begin{proof}
	If $Q$ is a weak sum matrix, then from Lemma~\ref{lm:sum} it follows that $Q$ is linearizable and a linearization is given by \eqref{eq:lin}. Note that in the case of the complete bipartite graph $K_{n_1,n_2}$, entries of the matrix $A$ in the expression \eqref{eq:sep} are the same for every fixed row. Hence, $r(i)=(n_1+n_2-1)a(i,j)$ for any column $j$.

	 Let $\min\{n_1,n_2\}\geq 3$ and assume that $Q$ is linearizable. We fix two arbitrary nonadjacent edges $a$ and $x$. We will show that for any two additional edges $b$ and $y$, ($b\neq y$) conditions of Lemma~\ref{lm:back} are satisfied, which completes the proof.
\begin{figure}[h]
        \centering
        \begin{subfigure}[b]{0.11\textwidth}
                \includegraphics[width=\textwidth]{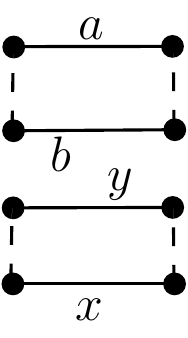}
                \caption{}
                \label{fig:a}
        \end{subfigure}\quad
        \begin{subfigure}[b]{0.11\textwidth}
                \includegraphics[width=\textwidth]{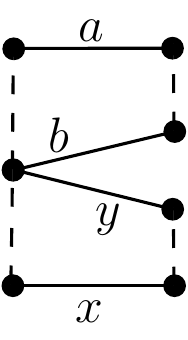}
                \caption{}
                \label{fig:b}
        \end{subfigure}\quad
        \begin{subfigure}[b]{0.11\textwidth}
                \includegraphics[width=\textwidth]{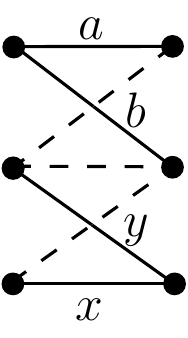}
                \caption{}
                \label{fig:c}
        \end{subfigure}\quad
\begin{subfigure}[b]{0.11\textwidth}
                \includegraphics[width=\textwidth]{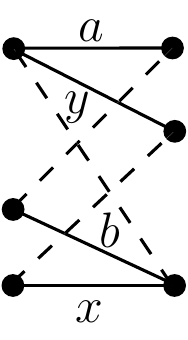}
                \caption{}
                \label{fig:d}
        \end{subfigure}\quad
        \begin{subfigure}[b]{0.11\textwidth}
                \includegraphics[width=\textwidth]{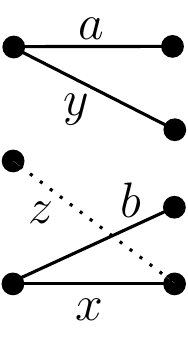}
                \caption{}
                \label{fig:e}
        \end{subfigure}\quad
        \begin{subfigure}[b]{0.11\textwidth}
                \includegraphics[width=\textwidth]{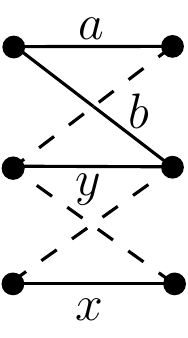}
                \caption{}
                \label{fig:f}
        \end{subfigure}\\
	\begin{subfigure}[b]{0.11\textwidth}
                \includegraphics[width=\textwidth]{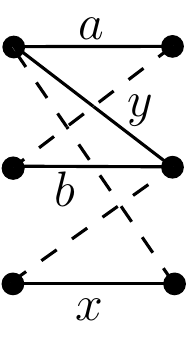}
                \caption{}
                \label{fig:g}
        \end{subfigure}\quad
        \begin{subfigure}[b]{0.11\textwidth}
                \includegraphics[width=\textwidth]{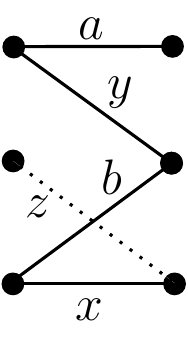}
                \caption{}
                \label{fig:h}
        \end{subfigure}\quad
        \begin{subfigure}[b]{0.11\textwidth}
                \includegraphics[width=\textwidth]{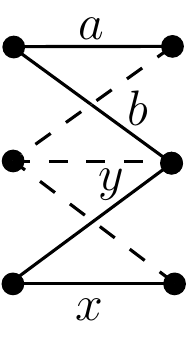}
                \caption{}
                \label{fig:i}
        \end{subfigure}\quad
\begin{subfigure}[b]{0.11\textwidth}
                \includegraphics[width=\textwidth]{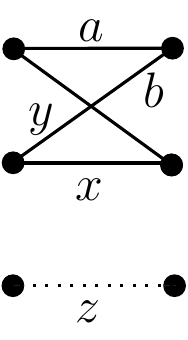}
                \caption{}
                \label{fig:j}
        \end{subfigure}\quad
        \begin{subfigure}[b]{0.11\textwidth}
                \includegraphics[width=\textwidth]{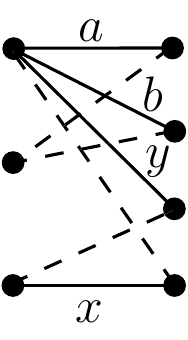}
                \caption{}
                \label{fig:k}
        \end{subfigure}\quad
        \begin{subfigure}[b]{0.11\textwidth}
                \includegraphics[width=\textwidth]{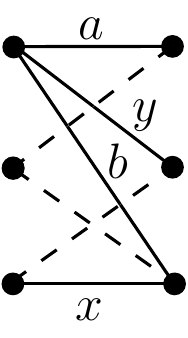}
                \caption{}
                \label{fig:l}
        \end{subfigure}\quad
        \begin{subfigure}[b]{0.11\textwidth}
                \includegraphics[width=\textwidth]{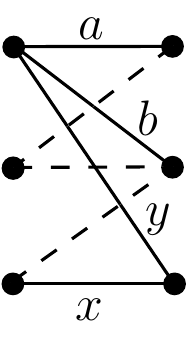}
                \caption{}
                \label{fig:m}
        \end{subfigure}
        \caption{The $a,b,x,y$ configurations}\label{fig:bip}\vspace{-10pt}
\end{figure}

In Figure~\ref{fig:bip} all possible essentially different configurations of incidences between $a$, $b$, $x$ and $y$, up to symmetries, are presented. (The symmetries are defined by exchanging sets $\{a,b\}$ and $\{x,y\}$, and by exchanging elements inside of those two sets.) Configurations in Figure~\ref{fig:bip} are of two types. In the cases where we apply Lemma~\ref{lm:back} with $k=2$, the configurations are extended by (dashed)  edge(s) that form feasible backbone cycles. In other cases we use one auxiliary edge of Lemma~\ref{lm:back} ($k=3$), therefore configurations are extended by the edge $z$ which plays the role of $z_2$ in Lemma~\ref{lm:back}.
\end{proof}

In the previous lemma, linearization characterization only for $\min\{n_1,n_2\}\geq 3$ is given. Note that for configurations  in Figure~\ref{fig:bip}\subref{fig:e}, \ref{fig:bip}\subref{fig:h}, \ref{fig:bip}\subref{fig:i}, \ref{fig:bip}\subref{fig:j}, \ref{fig:bip}\subref{fig:k}, \ref{fig:bip}\subref{fig:e} and \ref{fig:bip}\subref{fig:m}, we actually use the fact that $\min\{n_1,n_2\}\geq 3$. If $\min\{n_1,n_2\}< 3$, linearizable cost matrix $Q$ is not necessary a weak sum matrix. Namely, if $n_1$ or $n_2$ equals to 1, $K_{n_1,n_2}$ is a tree, and if $n_1=n_2=2$, $K_{n_1,n_2}$ is a cycle. In both cases arbitrary $Q$ is linearizable. For the remaining case of $n_1=2$ and $n_2\geq 3$, we present the following counterexample of a symmetric matrix $Q$ that is linearizable but not a weak sum matrix. For $i\neq j$, cost element $q(i,j)$ is equal to 1 if edges $i$ and $j$ are adjacent through an $n_2$-set vertex, and 0 otherwise. Then the linearization costs are given by $c(i)=q(i,i)+2/(n_2+1)$.

\begin{lemma}\label{lm:components}
	Let $Q$ be a linearizable symmetric cost matrix of the QMSTP on a graph $G$. Then for every two distinct biconnected components $I$, $J$ of $G$, submatrix of costs $q(i,j)$, $i\in I$, $j\in J$ is a sum matrix.
\end{lemma}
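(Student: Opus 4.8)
The plan is to verify the hypothesis of Corollary~\ref{cor:back} with $k=2$: I fix arbitrary edges $a\in I$ and $x\in J$ and construct, for every $b\in I$ and $y\in J$, an $\{a,b\}$-$\{x,y\}$-backbone of $G$, which then forces $Q_{IJ}$ to be a sum matrix. First I would dispose of the degenerate cases. If $|I|=1$, then $Q_{IJ}$ has a single row and is trivially a sum matrix (write $q(i,j)=0+q(i,j)$), and similarly if $|J|=1$; so assume $|I|,|J|\ge 2$. Since they contain at least two edges and are biconnected subgraphs of a simple graph, $I$ and $J$ are then $2$-connected graphs on at least three vertices. As distinct biconnected components are edge-disjoint, for $b\in I\setminus\{a\}$ and $y\in J\setminus\{x\}$ the four edges $a,b,x,y$ are distinct; the remaining cases $b=a$ and $y=x$ make the required sum relation $q(b,y)=q(b,x)+q(a,y)-q(a,x)$ hold trivially, as observed in the proof of Lemma~\ref{lm:back}.

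To construct the backbone, I would use that in the $2$-connected graph $I$ the edges $a$ and $b$ lie on a common cycle $C_1$, and since all edges of a cycle belong to the same biconnected component, $C_1\subseteq I$; analogously there is a cycle $C_2\subseteq J$ through $x$ and $y$. Because $I$ and $J$ are distinct biconnected components they are edge-disjoint and share at most one vertex, hence so do $C_1$ and $C_2$; a short computation of the cycle rank $|E|-|V|+(\text{number of components})$ then shows that $H_0:=C_1\cup C_2$ has exactly two independent cycles, namely $C_1$ and $C_2$. I would extend $H_0$ greedily to a connected spanning subgraph $H'$ of $G$ by repeatedly adding an edge joining two distinct components; no such edge closes a cycle, so $H'$ still has cycle rank $2$, hence $|E(H')|=n+1$, and none of $a,b,x,y$ lies in $E(H')\setminus E(H_0)$. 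Put $B:=E(H')\setminus\{a,b,x,y\}$, so $|B|=n-3$. For any $e\in\{a,b\}$ and $f\in\{x,y\}$, the edge set $B\cup\{e,f\}$ is $E(H')$ with exactly one edge of $C_1$ and one edge of $C_2$ deleted; removing one edge from each of the two basis cycles of the connected graph $H'$ leaves it connected and acyclic with $n-1$ edges, i.e.\@ a spanning tree of $G$. Therefore $B$ is an $\{a,b\}$-$\{x,y\}$-backbone, and Corollary~\ref{cor:back}, applied with the length-two sequence $z_1=x$, $z_2=y$, yields that $Q_{IJ}$ is a sum matrix.

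I expect the only real work to be the graph-theoretic bookkeeping in the second paragraph: that $C_1,C_2$ can be taken inside $I,J$, that the block structure makes them edge-disjoint and forces them to share at most one vertex, and that the cycle-rank count survives both the greedy extension and the subsequent edge deletions. These are all standard facts about the blocks of a graph, but they are exactly what makes the backbone well-defined. Everything else — the reduction to $|I|,|J|\ge2$ and the appeal to Corollary~\ref{cor:back} — is routine, and, in contrast to the complete and complete bipartite cases, no case analysis on how $a,b,x,y$ sit relative to one another is needed, since $C_1$ and $C_2$ share no edge and so $C_1\cap C_2$ can never contain a pair from $\{a,b\}\times\{x,y\}$.
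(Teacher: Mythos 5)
Your proposal is correct and follows essentially the same route as the paper: fix $a\in I$, $x\in J$, use the fact that any two edges of a biconnected component lie on a common cycle, observe that the two cycles meet in at most one vertex and hence form feasible backbone cycles, extend to an $\{a,b\}$-$\{x,y\}$-backbone, and invoke Corollary~\ref{cor:back}. The only difference is that you spell out the cycle-rank bookkeeping for the backbone construction, which the paper leaves to its earlier general discussion of feasible backbone cycles.
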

\begin{proof} If $I$ or $J$ is just one edge, i.e.\@ a bridge, then there is nothing to prove, as every $1\times n$ matrix is a sum matrix. In the rest of the proof we assume that $\min\{|I|,|J|\}\geq 3$.

	We will again make use of backbones. First we fix two edges $a\in I$ and $x\in J$. It is easy to see that for every pair of additional edges $b\in I$ and $y\in J$ there exist an $\{a,b\}$-$\{x,y\}$-backbone. Namely, in every biconnected component, there exist a cycle that contains any pair of edges. Hence, there exist a cycle in $I$ that contains $a$ and $b$, and a cycle in $J$ that contains $x$ and $y$. As their intersection contains at most one vertex, they are feasible backbone cycles. Hence, by Corollary~\ref{cor:back}, the lemma follows.
\end{proof}

Lemma~\ref{lm:sum},~\ref{lm:bipartite},~\ref{lm:components} and Theorem~\ref{thm:cycle}~and~\ref{thm:complete} can be combined to produce the linearization characterization for more general class of graphs.

\begin{theorem}\label{thm:gen}
	Let $G$ be a graph such that its every biconnected component is either a clique, a cycle or a biclique (with vertex partition sets of sizes at least three). Then a symmetric cost matrix $Q$ of the QMSTP on $G$ is linearizable if and only if the submatrices of $Q$ that correspond to different biconnected components are sum matrices, and submatrices that correspond to single biconnected components that are either a clique or a biclique are symmetric weak sum matrices. Furthermore, if $Q$ is linearizable, a linearization can be computed in $O(|E|^2)$ time.
\end{theorem}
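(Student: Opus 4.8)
The plan is to prove the two implications separately, in each case reducing to the single-component results already in hand (Theorems~\ref{thm:cycle} and~\ref{thm:complete}, Lemmas~\ref{lm:sum},~\ref{lm:bipartite} and~\ref{lm:components}) and using Observation~\ref{obs1} to recombine the contributions of the individual biconnected components. Throughout, for a biconnected component $I\subseteq E$ I write $G_I$ for the subgraph of $G$ with edge set $I$, and I use the standard fact that for every spanning tree $T$ of $G$ the restriction $T\cap I$ is a spanning tree of $G_I$; consequently, fixing a spanning tree of the graph obtained from $G$ by contracting $V(G_I)$ to a single vertex gives a set $B\subseteq E\setminus I$ with the property that $B\cup T_I$ is a spanning tree of $G$ for every spanning tree $T_I$ of $G_I$.

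For the ``if'' direction, suppose $Q$ satisfies the stated block conditions. I would decompose $Q=Q^{\circ}+\sum_I R_I$, the sum taken over the cycle components $I$, where $R_I$ agrees with $Q$ on the block indexed by $I\times I$ and vanishes elsewhere and $Q^{\circ}:=Q-\sum_I R_I$. For a cycle component $I$ we have $R_I(T)=Q_{II}(T\cap I)$, which depends only on the spanning tree $T\cap I$ of the cycle $G_I$, so Theorem~\ref{thm:cycle} applied to $G_I$ with cost matrix $Q_{II}$ yields a vector on $I$ that, extended by zeros, linearizes $R_I$ on $G$. On the other hand $Q^{\circ}$ meets the hypotheses of Lemma~\ref{lm:sum}: its off-diagonal blocks are unchanged and hence sum matrices, its diagonal clique and biclique blocks are unchanged and hence symmetric weak sum matrices, and its diagonal blocks for cycle components and for bridges are now zero or $1\times 1$ and hence trivially symmetric weak sum matrices. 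Thus $Q^{\circ}$ is linearizable, and by Observation~\ref{obs1} so is $Q$; a linearization is obtained by adding the linearization of $Q^{\circ}$ (computed via Lemma~\ref{lm:sum} in $O(|E|^2)$ time) to the extended cycle vectors (each computed from \eqref{eq:cycle} in $O(|I|^2)$ time). Since the blocks are edge-disjoint we have $\sum_I|I|^2\le|E|^2$, the vector additions cost $O(|E|)$, and the biconnected components are found in linear time, so the total is $O(|E|^2)$.

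For the ``only if'' direction, suppose $Q$ is linearizable. For distinct biconnected components $I\ne J$, Lemma~\ref{lm:components} already shows $Q_{IJ}$ is a sum matrix, so it remains to treat a diagonal block $Q_{II}$ for a component $I$ whose subgraph $G_I$ is a clique or a biclique with both parts of size at least three. The idea is to restrict a global linearization to this block. With $B$ as above and $C$ a linearization of $Q$, expanding $Q(B\cup T_I)=C(B\cup T_I)$ and using the symmetry of $Q$ gives, for every spanning tree $T_I$ of $G_I$,
\[
Q_{II}(T_I)=\bigl(C(B)-Q(B)\bigr)+\sum_{e\in T_I}\Bigl(c(e)-2\sum_{u\in B}q(u,e)\Bigr).
\]
Because all spanning trees of $G_I$ have the same number of edges, the constant $C(B)-Q(B)$ can be spread evenly over the edge costs, so $Q_{II}$ is linearizable for the QMSTP on $G_I$. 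As $Q_{II}$ is symmetric, Theorem~\ref{thm:complete} (if $G_I$ is a clique) or Lemma~\ref{lm:bipartite} (if $G_I$ is a biclique with parts of size at least three) forces $Q_{II}$ to be a symmetric weak sum matrix, which together with the previous paragraph completes the characterization. The only classification ambiguity, namely a $K_3$ component that is at once a clique and a cycle, is harmless since every $3\times 3$ symmetric matrix is already a weak sum matrix.

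I expect the main obstacle to be the necessity step for clique and biclique blocks: one has to be sure that restricting a linearization of $Q$ on $G$ really does produce a linearization of the block $Q_{II}$ on its own graph $G_I$ — in particular that the cross terms $\sum_{u\in B}q(u,e)$ and the fixed quantity $C(B)-Q(B)$ together leave a tree-independent linear remainder once the common size of spanning trees of $G_I$ is used — and that $G_I$ is exactly a clique, cycle or biclique, so that Theorem~\ref{thm:complete} and Lemma~\ref{lm:bipartite} apply verbatim. The sufficiency direction and the running-time accounting are then routine bookkeeping.
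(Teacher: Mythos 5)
Your proposal is correct, and the sufficiency direction (decomposing $Q$ into a matrix satisfying Lemma~\ref{lm:sum} plus one block per cycle component handled by Theorem~\ref{thm:cycle}, then recombining via Observation~\ref{obs1}) is essentially identical to the paper's. The necessity direction, however, takes a genuinely different route. The paper argues that the backbones constructed \emph{inside the proofs} of Theorem~\ref{thm:complete} and Lemma~\ref{lm:bipartite} for a single biconnected component can be extended to backbones of all of $G$ by spanning the remaining vertices, so the backbone identities of Lemma~\ref{lm:back} can be re-derived within $G$; it therefore reuses the proofs, not the statements, of the component characterizations. You instead show that a global linearization \emph{restricts}: fixing a set $B\subseteq E\setminus I$ with $B\cup T_I$ a spanning tree of $G$ for every spanning tree $T_I$ of $G_I$, the identity
\[
Q_{II}(T_I)=\bigl(C(B)-Q(B)\bigr)+\sum_{e\in T_I}\Bigl(c(e)-2\sum_{u\in B}q(u,e)\Bigr)
\]
exhibits $Q_{II}$ as a linearizable QMSTP instance on $G_I$ itself (the additive constant being absorbable because all spanning trees of $G_I$ have $|V(G_I)|-1$ edges), after which Theorem~\ref{thm:complete} and Lemma~\ref{lm:bipartite} apply as black boxes. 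Your version is cleaner and more modular --- any future characterization of linearizable matrices on a biconnected graph class would plug in without revisiting its proof --- at the cost of having to justify the restriction step (that $T\cap I$ is a spanning tree of $G_I$ and that a suitable $B$ exists via contraction), which you do correctly; the paper's version avoids that bookkeeping but is only as rigorous as its one-sentence appeal to ``extending backbones.'' Both arguments are sound, and your handling of the $K_3$ clique/cycle ambiguity and of the running time matches the paper.
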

\begin{proof}
	Let $Q$ be of the form described in the theorem. We denote by $k$ the number of biconnected components of $G$ that are cycles. Then $Q$ can be expressed as $Q=M+B_1+\cdots +B_k$, where $M$ satisfies the hypothesis of Lemma~\ref{lm:sum}, and $B_i$ is a matrix in which all entries that are not in the submatrix defined by the $i$-th cycle, are equal 0. 
Note that matrices $M$ and $B_i$, $i=1,\ldots,k$, can be found in $O(|E|^2)$ time.
From Lemma~\ref{lm:sum} it follows that $M$ is linearizable and its linearization vector $C_M$ can be computed by \eqref{eq:lin}. From Theorem~\ref{thm:cycle}, it follows that for every $i=1,\ldots,k$, $B_i$ is linearizable and its linearization $C_{B_i}$ is given by \eqref{eq:cycle}. Therefore by Observation~\ref{obs1}, $Q$ is also linearizable and its linearization vector is given by $C=C_M+C_{B_1}+\cdots+C_{B_k}$.

Conversely, if $Q$ is linearizable then it has to be of the form described in the theorem. This follows directly from Lemma~\ref{lm:components} and the proofs of Theorem~\ref{thm:complete} and Lemma~\ref{lm:bipartite}. Namely, backbones of biconnected components can be extended into backbones of $G$ by adding edges that span remaining vertices.
\end{proof}

We present an example that illustrates Theorem~\ref{thm:gen}. Let $G=(V,E)$ be the graph presented by Figure~\ref{fig3}\subref{fig3a}. Graph $G$ has four biconnected components with its corresponding edge sets being $E_1=\{e_1,e_2,e_3\}$, $E_2=\{e_4\}$, $E_3=\{e_5,e_6,e_7,e_8,e_9,e_{10}\}$ and $E_4=\{e_{11}\}$.
\begin{figure}[h]
	 \centering
        \begin{subfigure}[b]{0.25\textwidth}
		\includegraphics[width=\textwidth]{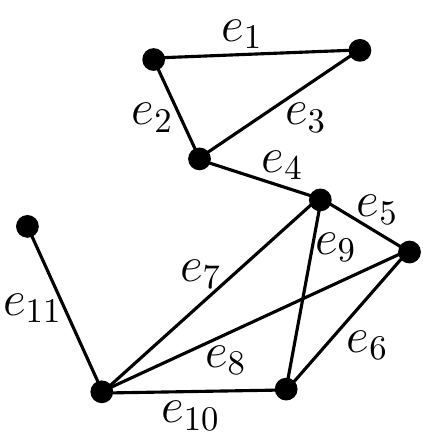}\\ \\
              \caption{}
              \label{fig3a}
        \end{subfigure}\qquad\quad
	 \begin{subfigure}[b]{0.53\textwidth}
              $$\left(\begin{array}{ccc;{2pt/2pt}c;{2pt/2pt}cccccc;{2pt/2pt}c}
			1 & 4 & 8 & 7 & 4 & 6 & 3 & 8 & 5 & 7 & 9 \\
			4 & 2 & 9 & 2 & 3 & 5 & 2 & 7 & 4 & 6 & 9 \\
			8 & 9 & 3 & 4 & 5 & 7 & 4 & 9 & 6 & 8 & 0 \\
\hdashline[2pt/2pt] 7 & 2 & 4 & 8 & 0 & 9 & 3 & 6 & 6 & 3 & 2 \\
\hdashline[2pt/2pt] 4 & 3 & 5 & 0 & 5 & 4 & 5 & 8 & 3 & 7 & 3 \\
			6 & 5 & 7 & 9 & 4 & 2 & 3 & 6 & 1 & 5 & 3 \\
			3 & 2 & 4 & 3 & 5 & 3 & 1 & 7 & 2 & 6 & 4 \\
			8 & 7 & 9 & 6 & 8 & 6 & 7 & 5 & 5 & 9 & 5 \\
			5 & 4 & 6 & 6 & 3 & 1 & 2 & 5 & 8 & 4 & 6 \\
			7 & 6 & 8 & 3 & 7 & 5 & 6 & 9 & 4 & 7 & 0 \\
			\hdashline[2pt/2pt] 9 & 9 & 0 & 2 & 3 & 3 & 4 & 5 &  6 & 0 & 2
		\end{array}\right)$$
              \caption{}
              \label{fig3b}
        \end{subfigure}
	\caption{A linearizable QMSTP instance}
	\label{fig3}\vspace{-10pt}
\end{figure}
Let the symmetric matrix $Q=(q(i,j))$, presented in Figure~\ref{fig3}\subref{fig3b}, be a QMSTP cost matrix associated to $G$, such that $q(i,j)$ is the QMSTP cost associated to the edge pair $(e_i,e_j)$. 
We denote by $Q_{E_iE_j}$ the submatrix of $Q$ consisting of elements $q(k,\ell)$ for $e_k\in E_i$ and $e_{\ell}\in E_j$. In Figure~\ref{fig3}\subref{fig3b}, $Q$ is divided into submatrices $Q_{E_iE_j}$, $i,j\in\{1,\ldots,4\},$ using dashed lines.

Biconnected components $E_1,\ldots,E_4$ are cycles and cliques, so according to Theorem~\ref{thm:gen}, matrix $Q$ is linearizable if and only if submatrices $Q_{E_iE_j}$ have some specific properties. In particular, submatrices that correspond to a pair of different biconnected components, i.e.\@ $Q_{E_iE_j}$ with $i\neq j$, have to be submatrices. There are 12 such submatrices, and 10 of them have one row and/or one column in which case sum matrix property is trivially satisfied. The remaining 2 submatrices are $Q_{E_1E_3}$ and $Q_{E_3E_1}$. Since $Q$ is symmetric, they are transpose of each other, hence it is enough to check sum matrix property  only for one of them. Indeed they are sum matrices, since they are a sum of vectors $(3,2,4)$ and $(1,3,0,5,2,4)$. It remains to check the remaining 4 submatrices $Q_{E_iE_i},$ $i\in\{1,\ldots,4\}$. If $E_i$ is a clique or (big enough) biclique then we need to check whether  $Q_{E_iE_i}$ is a weak sum matrix. If $E_i$ is a cycle then there are no necessary conditions on $Q_{E_iE_i}$. Edges $E_1$ form the complete graph on three vertices, but in the same time $E_1$ forms a cycle. This is not in contradiction, as every symmetric $3\times 3$ matrix is a weak sum matrix. Biconnected components $E_2$ and $E_4$ are trivial cliques, hence the weak sum property of $Q_{E_2E_2}$ and $Q_{E_4E_4}$ is trivially satisfied. $E_3$ is a complete graph, hence it remains to check whether $Q_{E_2E_2}$ is a weak sum matrix. 
It is easy to see that $Q_{E_2E_2}$ is a symmetric weak sum matrix generated by the vector $a=(3,1,2,5,0,4)$, i.e.\@ for $i\neq j$, $i$-th row and $j$-th column of the submatrix $Q_{E_2E_2}$ contains the value $a_i+a_j$. We see that all submatrices satisfy necessary properties, therefore $Q$ is linearizable. 

At this point, it is straightforward to obtain a linearization of $Q$. We can express $Q$ as $Q=M+B_1$, where $M$ is the matrix obtained from $Q$ by replacing elements in the submatrix $Q_{E_1E_1}$ by $0$. Matrix $M$ satisfies Lemma~\ref{lm:sum}, and its linearization vector $C_M$ can be calculated as described in the proof of Lemma~\ref{lm:sum} using vectors obtained in the analysis above. Furthermore, $B_1$ is linearizable and its linearization vector $C_B$ can be calculated as described in Theorem~\ref{thm:cycle}. Then vector $C=C_M+C_B$ is a linearization of $Q$. And $C=(54,41,48,12,23,42,23,67,40,45,2)$ is one such vector in the case of matrix $Q$.

\section{Recognition of Linearizable QMSTP}

Theorem~\ref{thm:gen} gives us a solution for the quadratic spanning tree linearization problem (QST-LP) for the class of graphs  in which every biconnected component is either a clique, a biclique or a cycle. Given such graph $G=(V,E)$, one can find in linear time its biconnected components (see \cite{ht73}), and determine which type they are. 
Now for a given (not necessary symmetric) cost matrix $Q$, from Observation~\ref{obs3} if follows that $Q$ is linearizable if and only if the symmetric matrix $\frac{1}{2}(Q+Q^T)$ is linearizable. According to Theorem~\ref{thm:gen}, to determine whether $Q$ is linearizable we need to check whether appropriate submatrices of $\frac{1}{2}(Q+Q^T)$ are sum matrices or symmetric weak sum matrices. 
In worst case this takes $\Theta(|E|^2)$ time, since potentially every element of $Q$ which is not on the main diagonal has to be examined. Next we examine whether the recognition can be done faster if the cost matrix is given in the factored form.

Let $M=(m(i,j))$ be an $n\times n$ matrix of rank $p$. Then the elements of $M$ are of the form
\begin{equation}\label{eq:fact}
	M(i,j)=\sum_{k=1}^p a_i^kb_j^k,
\end{equation}
for some vectors $a^k$ and $b^k$, $k=1,\ldots,p$. Hence, an $n\times n$ matrix of a rank $p$ can be represented with $2pn$ values. We say that \eqref{eq:fact} is a \emph{factored form representation} of matrix $M$.

Note that every sum matrix can be written as the sum of a constant row matrix and a constant column matrix. Since $\text{rank}(M_1+M_2)\leq \text{rank}(M_1)+\text{rank}(M_2)$ for every matrices $M_1$ and $M_2$, it follows that every sum matrix has the rank at most 2. Therefore, the problem of recognizing sum matrices represented in a factored form \eqref{eq:fact} is reduced to the following question.
\emph{Given $a_i,b_i,c_i,d_i$, $i=1,\ldots,n$, is it possible to decide in $O(n)$ time whether the matrix $M=(m(i,j))$ with  $m(i,j)=a_ib_j+c_id_j$ is a sum matrix?}
An affirmative answer to this question follows from the following theorem.

\begin{theorem}\label{thm:rank2sum}
	Let an $n\times n$ matrix $M=(m(i,j))$ be of the form $m(i,j)=a_ib_j+c_id_j$, $i,j=1,\ldots,n$.
	\begin{itemize}
		\item If at least one of the vectors $a,b,c,d$ is a constant vector, then $M$ is a sum matrix if and only if $a$ or $b$ is a constant vector, and $c$ or $d$ is a constant vector.
		\item If none of the vectors $a,b,c,d$ is a constant vector, then $M$ is a sum matrix if and only if there exist three constants $K\neq 0$, $K_1$ and $K_2$ such that $a_i=Kc_i+K_1$ and $d_i=-Kb_i+K_2$, $i=1,\ldots,n$.
	\end{itemize}
\end{theorem}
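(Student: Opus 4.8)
The plan is to reduce the whole statement to a single identity describing when $M$ is a sum matrix. First I would record the elementary fact that an $n\times n$ matrix $M=(m(i,j))$ is a sum matrix if and only if every $2\times2$ second difference vanishes, i.e.\ $m(i,j)-m(i,j')-m(i',j)+m(i',j')=0$ for all $i,i',j,j'$; the forward implication is trivial, and for the converse one sets $q_j:=m(1,j)$, $p_i:=m(i,1)-m(1,1)$ and checks $p_i+q_j=m(i,j)$. Substituting $m(i,j)=a_ib_j+c_id_j$, the second difference equals $(a_i-a_{i'})(b_j-b_{j'})+(c_i-c_{i'})(d_j-d_{j'})$, so $M$ is a sum matrix if and only if
\begin{equation*}
(a_i-a_{i'})(b_j-b_{j'})+(c_i-c_{i'})(d_j-d_{j'})=0\qquad\text{for all }i,i',j,j'.\tag{$\star$}
\end{equation*}
The rest of the proof is then just an analysis of $(\star)$, organized by which of $a,b,c,d$ are constant.

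For the first bullet I would note that $(\star)$, and the sum-matrix property, are invariant under transposing $M$ (which relabels $(a,b,c,d)\mapsto(b,a,d,c)$) and under writing $a_ib_j+c_id_j=c_id_j+a_ib_j$ (which relabels $(a,b,c,d)\mapsto(c,d,a,b)$); these two symmetries act transitively on $\{a,b,c,d\}$, and the asserted condition ``($a$ or $b$ constant) and ($c$ or $d$ constant)'' is itself invariant under them, so we may assume the constant vector is $a$. Then the first term of $(\star)$ vanishes identically, leaving $(c_i-c_{i'})(d_j-d_{j'})=0$ for all indices, which holds precisely when $c$ is constant or $d$ is constant. Since ``$a$ or $b$ constant'' is automatic here, this is exactly the claimed characterization.

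For the second bullet, assume none of $a,b,c,d$ is constant. Choose $i_0,i_0'$ with $a_{i_0}\neq a_{i_0'}$ and $j_0,j_0'$ with $b_{j_0}\neq b_{j_0'}$; applying $(\star)$ to these indices forces $c_{i_0}\neq c_{i_0'}$ and $d_{j_0}\neq d_{j_0'}$. Put $K:=-(d_{j_0}-d_{j_0'})/(b_{j_0}-b_{j_0'})\neq0$. Reading $(\star)$ with columns $j_0,j_0'$ gives $a_i-a_{i'}=K(c_i-c_{i'})$ for all $i,i'$, hence with $i'=1$ we get $a_i=Kc_i+K_1$ for $K_1:=a_1-Kc_1$. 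Substituting this back into $(\star)$ with rows $i_0,i_0'$ (for which $a_{i_0}-a_{i_0'}=K(c_{i_0}-c_{i_0'})\neq0$) yields $d_j-d_{j'}=-K(b_j-b_{j'})$ for all $j,j'$, i.e.\ $d_j=-Kb_j+K_2$ with $K_2:=d_1+Kb_1$. That proves necessity; for sufficiency, if $a_i=Kc_i+K_1$ and $d_j=-Kb_j+K_2$ then $m(i,j)=(Kc_i+K_1)b_j+c_i(-Kb_j+K_2)=K_1b_j+K_2c_i$, which is manifestly a sum matrix.

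I do not anticipate a genuine obstacle: the content lies entirely in finding the right reformulation $(\star)$ of ``sum matrix'' and then careful bookkeeping. The two places that need care are (i) phrasing the symmetry reduction in the first bullet so that all four ``constant'' sub-cases are dispatched at once, and (ii) in the second bullet, correctly tracking the signs and the non-vanishing of the various differences — this is exactly where the hypothesis that none of $a,b,c,d$ is constant enters, ensuring that $K$ is well defined and nonzero.
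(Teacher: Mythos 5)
Your proof is correct and follows essentially the same route as the paper: both reduce the sum-matrix property to the vanishing of the second differences $(a_i-a_{i'})(b_j-b_{j'})+(c_i-c_{i'})(d_j-d_{j'})$ and then split into the same two cases with the same conclusions. Your bookkeeping in the second case is in fact slightly cleaner than the paper's, which divides by row differences $c_i-c_j$ and therefore introduces auxiliary maximal index sets of pairwise-distinct entries, whereas you only ever divide by the single fixed nonzero difference $b_{j_0}-b_{j_0'}$.
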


\begin{proof}
	Let a matrix $M=(m(i,j))$ be of the form $m(i,j)=a_ib_j+c_id_j$. Let us assume $M$ is a sum matrix, i.e.\@ there exist two vectors $e$ and $f$ such that $m(i,j)=e_i+f_j$, $i,j=1,\ldots,n$. Then for arbitrary $i,j,k,\ell\in\{1,\ldots,n\}$
\[
	m(i,k)-m(i,\ell)=f_k-f_\ell \quad \text{and} \quad m(j,k)-m(j,\ell)=f_k-f_\ell.
\]
Hence $m(i,k)-m(i,\ell)=m(j,k)-m(j,\ell)$. Now from $m(i,j)=a_ib_j+c_id_j$ it follows that
\[
	a_ib_k+c_id_k-a_ib_\ell-c_id_\ell=a_jb_k+c_jd_k-a_jb_\ell-c_jd_\ell,
\]
which can be rearranged to
\[
	a_i(b_k-b_\ell)+c_i(d_k-d_\ell)=a_j(b_k-b_\ell)+c_j(d_k-d_\ell).
\]
Finally, we get a necessary condition that if $M$ is a sum matrix then
\begin{equation}\label{eq6}
	(a_i-a_j)(b_k-b_\ell)=-(c_i-c_j)(d_k-d_\ell),
\end{equation}
for every $i,j,k,\ell\in \{1,\ldots,n\}$.
Now we divide our investigation into two cases. 

\textbf{Case 1}: \textit{At least one of the vectors $a,b,c,d$ is a constant vector.} Without loss of generality we can assume that $a$ is a constant vector. From \eqref{eq6} it follows that
\begin{equation}\label{eq7}
	(c_i-c_j)(d_k-d_\ell)=0\quad \forall i,j,k,\ell\in\{1,\ldots,n\}.
\end{equation}
Hence either $c$ or $d$ is a constant vector. Otherwise there would exist $i,j$ for which $c_i-c_j\neq 0$, and $k,\ell$ for which $d_k-d_\ell\neq 0$ which would contradict \eqref{eq7}.

Note that this is also a sufficient condition. Let us assume $a_i=\alpha$ and $d_i=\delta$, $i=1,\ldots,n$. Then,
\[
	m(i,j)=\alpha b_j+c_i\delta=e_i+f_j \quad \forall i,j\in\{1,\ldots,n\},
\]
where $e_i:=\delta c_i$ and $f_i:=\alpha b_i$, $i=1,\ldots,n$. In the case when $c$ (instead of $d$) is a constant vector, in a similar way one gets that $M$ is a sum matrix.

\textbf{Case 2}: \textit{None of the vectors $a,b,c,d$ is a constant vector.} Assume that there are two elements of vector $a$ that are the same, i.e.\@ there exist $i,j$, $i\neq j$ such that $a_i=a_j$. Then for the same $i,j$ $c_i=c_j$ holds. Assume the contrary, i.e.\@  $a_i=a_j$ and $c_i\neq c_j$. As $d$ is not a constant vector, there exist $k,\ell$ such that $d_k\neq d_\ell$. Now for such $i,j,k,\ell$, equation \eqref{eq6} does not hold, which is a contradiction. Hence, $a_i=a_j$ if and only if $c_i=c_j$. Using the same logic, for all $i,j\in\{1,\ldots,n\}$, $b_i=b_j$ if and only if $d_i=d_j$.

Let $N_1\subseteq \{1,\ldots,n\}$ be a maximal set of indices $i$ for which $a_i$'s (and $c_i$'s) are pairwise distinct. That is, for every $i,j\in N_1$, $i\neq j$, it follows that $a_i\neq a_j$ (and hence $c_i\neq c_j$ also). Let $N_2$ be a set of indices with the same property for vectors $b$ and $d$. Now from \eqref{eq6} it follows that
\[
 \frac{a_i-a_j}{c_i-c_j}=-\frac{d_k-d_\ell}{b_k-b_\ell},
\]
for every distinct $i,j\in N_1$ and $k,\ell\in N_2$. By fixing some distinct $k,\ell\in N_2$, it follows that $(a_i-a_j)/(c_i-c_j)$ is a nonzero constant (which we denote by $K$) for every distinct $i,j\in N_1$. Analogously, it follows that $(d_k-d_\ell)/(b_k-b_\ell)=-K$ for every distinct $k,\ell\in N_2$. Hence
\[
	a_i-a_j=K(c_i-c_j)=Kc_i-Kc_j\quad \forall i,j\in N_1,
\]
from which it follows that $a_i=Kc_i+K_1$ for some constant $K_1$ and for all $i\in N_1$. Note that from the way we defined $N_1$, this relation can be extended to entire $\{1,\ldots, n\}$, i.e.\@ we have that
\begin{equation}\label{eq8}
	a_i=Kc_i+K_1 \quad i=1,\ldots, n,
\end{equation}
for some constants $K\neq 0$ and $K_1$. Analogously we obtain that
\begin{equation}\label{eq9}
	d_i=-Kb_i+K_2 \quad i=1,\ldots, n,
\end{equation}
for some additional constant $K_2$.

Note that \eqref{eq8} and \eqref{eq9} are sufficient conditions also. Namely
\begin{align*}
	m(i,j)&=a_ib_j+c_id_j\\
		&=(Kc_i+K_1)b_j+c_i(-Kb_j+K_2)\\
		&=K_2c_i+K_1b_j
\end{align*}
is a sum matrix relation.
\end{proof}

\begin{corollary}\label{cor:rec}
	Given $a_i,b_i,c_i,d_i$, $i=1,\ldots,n$, it is possible to decide in $O(n)$ time whether the square matrix $M=(m(i,j))$ with  $m(i,j)=a_ib_j+c_id_j$ is a sum matrix.
\end{corollary}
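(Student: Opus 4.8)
The plan is to convert the two-case characterization of Theorem~\ref{thm:rank2sum} into an explicit linear-time test. The first step is a single preprocessing pass: for each of the four vectors $a,b,c,d$, scan it once and record whether it is constant (i.e.\ whether all entries equal the first entry). This costs $O(n)$ and determines which case of Theorem~\ref{thm:rank2sum} we are in.

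If at least one of $a,b,c,d$ is constant, then by the first bullet of Theorem~\ref{thm:rank2sum}, $M$ is a sum matrix if and only if ($a$ is constant or $b$ is constant) and ($c$ is constant or $d$ is constant). The four constancy bits are already available, so this check is $O(1)$, and we return the corresponding answer.

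Otherwise none of $a,b,c,d$ is constant, and the second bullet applies: $M$ is a sum matrix if and only if there are constants $K\neq 0,K_1,K_2$ with $a_i=Kc_i+K_1$ and $d_i=-Kb_i+K_2$ for all $i$. Since $c$ is not constant, in $O(n)$ time we locate two indices $p,q$ with $c_p\neq c_q$ and set $K:=(a_p-a_q)/(c_p-c_q)$. If $M$ were a sum matrix, the required relation would give $a_p-a_q=K'(c_p-c_q)$ for the nonzero constant $K'$, forcing $K'=K$ and hence $K\neq 0$; so if the computed $K$ equals $0$ we report that $M$ is not a sum matrix. If $K\neq 0$, we set $K_1:=a_p-Kc_p$ and $K_2:=d_p+Kb_p$ (the only admissible values, since the relations must hold at index $p$) and verify in one more $O(n)$ pass that $a_i=Kc_i+K_1$ and $d_i=-Kb_i+K_2$ for every $i$; we answer ``yes'' exactly when both checks pass. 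Correctness of this last step is immediate from Theorem~\ref{thm:rank2sum}, whose sufficiency direction is precisely the identity $m(i,j)=K_2c_i+K_1b_j$.

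All of the above consists of a bounded number of linear scans, so the procedure runs in $O(n)$ time. There is no substantive obstacle; the only details requiring attention are guaranteeing that the index pair $p,q$ exists (ensured by the non-constancy of $c$ in the second case) and correctly disposing of the degenerate value $K=0$, both handled above.
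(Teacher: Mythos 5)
Your proposal is correct and follows essentially the same route as the paper's proof: test the four vectors for constancy, apply the first bullet of Theorem~\ref{thm:rank2sum} if any is constant, and otherwise compute $K$, $K_1$, $K_2$ from a suitable index pair and verify \eqref{eq8} and \eqref{eq9} in one linear pass. Your explicit handling of the choice of indices with $c_p\neq c_q$ and of the degenerate value $K=0$ is a slightly more careful rendering of the same algorithm.
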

\begin{proof}
	It follows directly form the statement and the proof of Theorem~\ref{thm:rank2sum}. Namely, the following is an $O(n)$ time algorithm. 

First we check whether any of the vectors $a,b,c,d$ are a constant vectors. If so, then $M$ is a sum matrix if and only if $a$ or $b$ is a constant vector, and $c$ or $d$ is a constant vector. Else, find $i$ and $j$ such that $a_i-a_j\neq 0$ and define $K$ to be $K=(a_i-a_j)/(c_i-c_j)$. Furthermore, define $K_1,K_2$ to be $K_1=a_1-Kc_1$ and $K_2=d_1+Kb_1$. Then $M$ is a sum matrix if and only if \eqref{eq8} and \eqref{eq9} are satisfied.
\end{proof}

In the case of complete graphs, the following  result on the recognition of linearizable cost matrices represented in factored form straightforwardly holds.

\begin{corollary}
	Let $G=(V,E)$ be a complete graph or a complete bipartite graph. Let $Q=(q(i,j))$ be a symmetric cost matrix of a QMSTP on the graph $G$ such that $q(i,j)=a_ib_j+c_id_j$, $i,j=1,\ldots,|E|,$ $i\neq j,$ for some given vectors $a,b,c,d$. Then in $O(|E|)$ time it can be decided whether $Q$ is linearizable, and if so, a linearization can be calculated in $O(|E|)$ time.
\end{corollary}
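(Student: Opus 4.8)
The plan is to combine the structural characterizations of this section with the factored-form techniques developed just above. By Theorem~\ref{thm:complete} (complete graph) and Lemma~\ref{lm:bipartite} (complete bipartite graph with both parts of size at least three), $Q$ is linearizable if and only if it is a symmetric weak sum matrix, i.e.\ $q(i,j)=w(i)+w(j)$ for all $i\neq j$ and some vector $w=(w(i))$. In that case the linearization \eqref{eq:lin} simplifies, exactly as in the proofs of those two results, to $c(i)=q(i,i)+2(|V|-2)\,w(i)$: one has $r(i)=(|V|-1)\,w(i)$ and the diagonal part of $Q$ in \eqref{eq:sep} equals $q(i,i)-2w(i)$. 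So once $w$ is known, $C$ is written down in $O(|E|)$ time, and it remains only to recognize symmetric weak sum matrices given in factored form in $O(|E|)$ time and, in the affirmative case, to produce $w$.

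If $|E|\le 3$ every symmetric matrix of that order is a weak sum matrix, so the answer is trivially ``yes'' and $w$ is read off directly; assume $|E|\ge 4$. The key reformulation is that $Q$ is a symmetric weak sum matrix if and only if $q(i,j)-q(i,1)-q(1,j)$ takes one and the same value $\kappa$ for all $i,j\in\{2,\dots,|E|\}$ with $i\neq j$. Necessity is immediate by substituting $q(i,j)=w(i)+w(j)$; for sufficiency put $e_i=q(i,1)$, $f_j=q(1,j)+\kappa$ for $i,j\ge 2$, $e_1=-\kappa$, $f_1=0$, verify $q(i,j)=e_i+f_j$ off the diagonal, and use symmetry of $Q$ to replace the pair $(e,f)$ by a single $w$. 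Pivoting on the first row and column is what makes this work, because the unconstrained diagonal entry $q(1,1)$ never enters the identity; this is also precisely why Corollary~\ref{cor:rec} cannot be invoked as a black box, since a rank-two matrix can be a weak sum matrix without being a sum matrix.

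Substituting $q(i,j)=a_ib_j+c_id_j$ and regrouping the pivoted expression as $(a_i-a_1)(b_j-b_1)+(c_i-c_1)(d_j-d_1)-(a_1b_1+c_1d_1)$, the condition becomes: the rank-$\le 2$ factored matrix with entries $(a_i-a_1)(b_j-b_1)+(c_i-c_1)(d_j-d_1)$, $i,j\in\{2,\dots,|E|\}$, has all off-diagonal entries equal to a common constant. This is exactly the kind of question settled by the case analysis of Theorem~\ref{thm:rank2sum}, now with the diagonal excluded: for fixed $i$ the points $(b_j-b_1,d_j-d_1)$, $j\neq i$, must all lie on the line $(a_i-a_1)x+(c_i-c_1)y=\text{const}$. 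One $O(|E|)$ scan decides whether the point set $\{(b_j-b_1,d_j-d_1):j\ge 2\}$ affinely spans $\mathbb{R}^2$, lies on a proper line, or is a single point. If it spans $\mathbb{R}^2$ the condition is impossible once $|E|\ge 5$ (one cannot have three affinely independent points such that deleting any single one leaves the rest collinear), and $|E|\ge 6$ in every relevant instance for which the characterization is not already trivial, so the answer is ``no''; in the two remaining cases the line (or point) is fixed and the requirement degenerates to ``$(a_i-a_1,c_i-c_1)$ is parallel to the normal of that line'' (resp.\ satisfies one scalar equation) together with a common value of $\kappa$, a bounded number of scalar tests per index, hence $O(|E|)$ overall. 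If the test succeeds, recover $w$ from $w(1)=\tfrac12\big(q(1,2)+q(1,3)-q(2,3)\big)$ and $w(i)=q(1,i)-w(1)$ for $i\ge 2$, and output $C$ as in the first paragraph.

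The main obstacle is essentially the recognition that the free diagonal prevents a direct appeal to Corollary~\ref{cor:rec}: one has to re-run the case distinction of Theorem~\ref{thm:rank2sum} with the diagonal entries omitted and check that omitting those boundedly many entries does not break linear time --- the ``spans $\mathbb{R}^2$'' branch, where the weak and strong forms of the condition genuinely diverge, being disposed of by the lower bound on $|E|$. Everything else is routine bookkeeping: the first paragraph is a direct appeal to results already proved, and the pivot identity together with the recovery of $w$ are elementary.
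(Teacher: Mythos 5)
Your route is genuinely different from the paper's. The paper disposes of this corollary in one line by invoking Corollary~\ref{cor:rec} together with the $O(1)$ computability of $r(i)$; you instead insist that Corollary~\ref{cor:rec} cannot be used as a black box because it tests the \emph{sum} property of the full factored matrix, whereas linearizability on $K_n$ and $K_{n_1,n_2}$ requires only the \emph{weak} sum property, and a rank-$\le 2$ factored matrix can be weak sum without being sum (e.g.\ $a=b=(1,0,\ldots,0)$, $c=d=0$ gives $M=\mathrm{diag}(1,0,\ldots,0)$). That observation is correct and identifies a real distinction that the paper's own proof glosses over. Your first two paragraphs --- the reduction to recognizing symmetric weak sum matrices in factored form, the pivot identity ``$q(i,j)-q(i,1)-q(1,j)$ constant for $i,j\ge 2$, $i\ne j$,'' the formula $c(i)=q(i,i)+2(|V|-2)w(i)$, and the recovery of $w$ --- are all sound.

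The gap is in the third paragraph, in the ``affinely spans $\mathbb{R}^2$'' branch. Writing $u_i=(a_i-a_1,c_i-c_1)$ and $v_j=(b_j-b_1,d_j-d_1)$, the condition to be checked is that $\langle u_i,v_j\rangle$ is constant over all $i\ne j$ with $i,j\ge 2$. Collinearity of $\{v_j:j\ne i\}$ is forced only for those $i$ with $u_i\ne 0$; when $u_i=0$ the $i$-th family of constraints merely forces the common value to be $0$ and restricts the $v_j$ not at all. Hence ``the $v_j$ affinely span $\mathbb{R}^2$'' does not imply the answer is ``no.'' Concretely, on $K_4$ ($|E|=6$) take $a=(0,0,0,0,0,0)$, $c=(0,0,0,0,0,1)$, $b=(0,1,2,3,4,0)$, $d=(0,0,0,0,0,1)$. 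Then $q(i,j)=a_ib_j+c_id_j=c_id_j=0$ for every $i\ne j$, so $Q$ is a symmetric weak sum matrix (take $w=0$) and is linearizable; yet the points $v_2,\ldots,v_6$ affinely span $\mathbb{R}^2$, so your algorithm rejects it. (The same input defeats the paper's one-line proof as well: $a$ is constant but neither $c$ nor $d$ is, so Theorem~\ref{thm:rank2sum} declares the full factored matrix not a sum matrix, even though $Q$ is linearizable.) To repair your argument you must run the degeneracy analysis symmetrically on the $u_i$ as well as on the $v_j$ --- in particular, split off first the subcases where $a$ and $c$ are both constant (and, dually, where $b$ and $d$ are both constant), and only then apply the ``three affinely independent points'' argument to the indices with $u_i\ne 0$.
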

\begin{proof}
	If follows directly form Corollary~\ref{cor:rec} and the fact that in the case of complete (bipartite) graphs, $r(i)$ from \eqref{eq:lin} can be calculated in $O(1)$ time for every $i=1,\ldots,|E|$.
\end{proof}

\section{Conclusions and Future Work}

We investigated the problem of characterizing linearizable QMSTP cost matrices, and we resolved the problem for a broad class of graphs. The main result is presented as Theorem~\ref{thm:gen}. In particular, given a graph $G$, Lemma~\ref{lm:sum} gives a sufficient condition for a cost matrix to be linearizable, and in the case of complete and complete bipartite graphs, the condition is also necessary. A natural question that imposes itself is: \emph{For which graphs the conditions of Lemma~\ref{lm:sum} is necessary?} In the view of Lemma~\ref{lm:components}, this question can be rephrased as the following open problem: \emph{For which biconnected graphs a symmetric QMSTP cost matrix is linearizable only if it is a weak sum matrix.}

In this paper so far we have encountered two types of biconnected graphs for which a linearizable QMSTP cost matrix does not need to be a weak sum matrix. These graphs were cycles and  complete bipartite graphs $K_{2,n}$. Note that both of these graphs classes contain a vertex with degree $2$. As a matter of fact, for every biconnected graph that contains a vertex with degree $2$, the weak sum condition is not necessary. Namely, let $G=(V,E)$ be a biconnected graph such that $p\in V$ is of degree $2$ and $E_p$ is the set of two edges adjacent to $p$. Then the following symmetric matrix $Q=(q(i,j))$ given by
$$q(e,f) = \begin{cases} 
	1/2 & \mbox{if } e,f\in E_p,\ e\neq f,\\ 
	1/(2(n-3)) & \mbox{if } e,f\in E\setminus E_p,\ e\neq f,\\ 
	0 & \mbox{otherwise,}
\end{cases}$$
is linearizable, but it is not a weak sum matrix. The linearization is given by $c(e)=\frac{n-3}{n-1}$, $e\in E$. (Note that such cost matrices have even stronger property that the cost of every spanning tree is the same.)
Therefore, an interesting question would be to identify how dense a graph needs to be in order that linearizable cost matrices are necessarily weak sum. Is it enough that the minimum vertex degree is at least $3$?

\section*{Acknowledgment}

This research work was supported by an NSERC discovery grant and an NSERC discovery accelerator supplement awarded to Abraham P. Punnen.

\end{document}